\providecommand{\U}[1]{\protect\rule{.1in}{.1in}}
\newtheorem{theorem}{Theorem}[section]
\newtheorem{corollary}[theorem]{Corollary}
\newtheorem{remark}[theorem]{Remark}
\newtheorem{lemma}[theorem]{Lemma}
\numberwithin{equation}{section}
\newtheorem*{kahane}{Kahane's Inequality}
\newcommand{\mkn}[1]{\mathcal M(#1,n)}
\newcommand{\bi}{\mathbf i}
\newcommand{\veps}{\varepsilon}
\newcommand{\onep}{\left|\frac1{\mathbf p}\right|}
\begin{document}
\title[Optimal Hardy--Littlewood type inequalities]{Optimal Hardy--Littlewood type inequalities for polynomials and multilinear operators}
\author[Albuquerque]{N. Albuquerque}
\address{Departamento de Matem\'{a}tica, \newline \indent
Universidade Federal da Para\'{\i}ba,  \newline \indent
58.051-900 - Jo\~{a}o Pessoa, Brazil.}
\email{ngalbqrq@gmail.com}

\author[Bayart]{F. Bayart}
\address{Laboratoire de Math\'ematiques, \newline\indent
Universit\'e Blaise Pascal Campus des C\'ezeaux, \newline\indent
 F-63177 Aubiere Cedex, France.}
\email{Frederic.Bayart@math.univ-bpclermont.fr}

\author[Pellegrino]{D. Pellegrino}
\address{Departamento de Matem\'{a}tica, \newline\indent
Universidade Federal da Para\'{\i}ba, \newline\indent
58.051-900 - Jo\~{a}o Pessoa, Brazil.}
\email{dmpellegrino@gmail.com and pellegrino@pq.cnpq.br}

\author[Seoane]{J. B. Seoane-Sep\'{u}lveda}
\address{Departamento de An\'{a}lisis Matem\'{a}tico,\newline\indent Facultad de Ciencias Matem\'{a}ticas, \newline\indent Plaza de Ciencias 3, \newline\indent Universidad Complutense de Madrid,\newline\indent Madrid, 28040, Spain.\newline \indent
\textsc{ and }\newline \indent  \noindent 
Instituto de Ciencias Matem\'aticas -- ICMAT \newline \indent 
calle Nicol\'as Cabrera 13--15,   \newline \indent Madrid, 28049, Spain.}
\email{jseoane@mat.ucm.es}

\subjclass[2010]{46G25, 47H60}
\keywords{Absolutely summing operators, multilinear operators, Bohnenblust-Hille inequality}
\thanks{D. Pellegrino and J.B. Seoane-Sep\'{u}lveda was supported by CNPq Grant 401735/2013-3 (PVE - Linha 2). N. Albuquerque was supported by CAPES}

\begin{abstract}
In this paper we obtain quite general and definitive forms for Hardy--Littlewood type inequalities. Moreover, when restricted to the original particular cases, our approach provides much simpler and straightforward proofs and we are able to show that in most cases the exponents involved are optimal. The technique we used is a combination of probabilistic tools and of an interpolative approach; this former technique is also employed in this paper to improve the constants for vector-valued Bohnenblust--Hille type inequalities.
\end{abstract}
\maketitle

\section{Introduction}

In 1930 Littlewood \cite{LW} has shown the following result on bilinear forms on $c_{0}\times c_{0}$, now called Littlewood's $4/3$ inequality: for any bounded bilinear form $A:c_{0}\times c_{0} \rightarrow \mathbb{C}$,
\[
\left(  \sum_{i,j=1}^{+\infty}|A(e_{i},e_{j})|^{\frac{4}{3}}\right)  ^{\frac
{3}{4}}\leq\sqrt{2}\Vert A\Vert
\]
and, moreover, the exponent $4/3$ is optimal.  From now on, $m\geq1$ is a positive
integer, $\mathbf{p}:=\left(  p_{1},\dots,p_{m}\right)  \in\left[
1,+\infty\right]  ^{m}$ and
\[
\left\vert \frac{1}{\mathbf{p}}\right\vert :=\frac{1}{p_{1}}+\dots+\frac
{1}{p_{m}}.
\]
For $1\leq p<+\infty$, let us set $X_{p}:=\ell_{p}$ and let us define
$X_{\infty}=c_{0}$. As soon as Littlewood's $4/3$ inequality appeared, it was
rapidly extended to more general frameworks. For instance:

\begin{itemize}
\item (Bohnenblust and Hille, \cite[Theorem I]{BH}, 1931 (see also \cite{dddsss})) There exists a constant $C=C(m)\geq1$ such
that
\begin{equation}
\left(  \sum\limits_{i_{1},...,i_{m}=1}^{+\infty}\left\vert A(e_{i_{^{1}}%
},...,e_{i_{m}})\right\vert ^{\frac{2m}{m+1}}\right)  ^{\frac{m+1}{2m}}\leq
C\left\Vert A\right\Vert \label{juui}%
\end{equation}
for all continuous $m$-linear forms $A:c_{0}\times\cdots\times c_{0}%
\rightarrow\mathbb{C}$ and the exponent $\frac{2m}{m+1}$ is optimal.

\item (Hardy and Littlewood, \cite{hardy.lw}, 1934 (see also \cite[page 224]{hardy})/Praciano-Pereira, \cite[Theorems A and B]{praciano}, 1981) Let $\mathbf{p}\in\left[  1,+\infty\right]  ^{m}$ with
\[
\left\vert {\frac{1}{\mathbf{p}}}\right\vert \leq\frac{1}{2},
\]
then there exists a constant $C>0$ such that, for every continuous $m$-linear
form $A:X_{p_{1}}\times\dots\times X_{p_{m}}\rightarrow\mathbb{C}$,
\begin{equation}
\left(  \sum_{i_{1},\dots,i_{m}=1}^{+\infty}|A(e_{i_{1}},\dots,e_{i_{m}%
})|^{\frac{2m}{m+1-2\left\vert {\frac{1}{\mathbf{p}}}\right\vert }}\right)
^{\frac{m+1-2\left\vert {\frac{1}{\mathbf{p}}}\right\vert }{2m}}\leq C\Vert
A\Vert.\label{juui2}%
\end{equation}

\item (Defant and Sevilla-Peris, \cite[Theorem 1]{df}, 2009) If $1\leq s\leq q\leq2,$ there exists a constant
$C>0$ such that, for every continuous $m$-linear mapping $A:c_{0}\times
\dots\times c_{0}\rightarrow\ell_{s}$, then
\[
\left(  \sum_{i_{1},\dots,i_{m}=1}^{+\infty}\Vert A(e_{i_{1}},\dots,e_{i_{m}%
})\Vert_{\ell_{q}}^{\frac{2m}{m+2\left(  \frac{1}{s}-\frac{1}{q}\right)  }}\right)
^{\frac{m+2\left(  \frac{1}{s}-\frac{1}{q}\right)  }{2m}}\leq C\Vert A\Vert.
\]

\end{itemize}

Very recently the previous results were generalized by the authors and by Dimant and Sevilla-Peris:

\begin{itemize}
\item (\cite[Corollary 1.3]{abps}, 2013) Let $1\leq s\leq q\leq2$ and $\mathbf{p}\in\left[  1,+\infty\right]  ^{m}$ such that
\begin{equation}
\frac{1}{s}-\frac{1}{q}-\left\vert{\frac{1}{\mathbf{p}}}\right\vert
\geq0.\label{assump}%
\end{equation}
Then there exists a constant $C>0$ such that, for every continuous $m$-linear mapping $A:X_{p_1} \times\dots\times X_{p_m} \rightarrow {X_{s}}$, we have
\[
\left(  \sum_{i_{1},\dots,i_{m}=1}^{+\infty}\Vert A(e_{i_{1}},\dots,e_{i_{m}%
})\Vert_{\ell_{q}}^{\frac{2m}{m+2\left(  \frac{1}{s}-\frac{1}{q}-\left\vert {\frac
{1}{\mathbf{p}}}\right\vert \right)  }}\right)  ^{\frac{m+2\left(  \frac{1}%
{s}-\frac{1}{q}-\left\vert {\frac{1}{\mathbf{p}}}\right\vert \right)  }{2m}%
}\leq C\Vert A\Vert
\]
and the exponent is optimal.
\end{itemize}

\begin{itemize}
\item (Dimant and Sevilla-Peris, \cite[Proposition 4.4]{dsp}, 2013) Let
$\mathbf{p}\in\left[  1,+\infty\right]  ^{m}$ and $s, q \in [1,+\infty]$ be such that $s\leq q$. Then there exists a constant $C>0$ such that, for every continuous $m$-linear mapping $A : X_{p_1} \times\cdots\times X_{p_m} \rightarrow X_s$, we have
\[
\left( \sum_{i_1,\dots,i_m=1}^{+\infty}
  \left\Vert A \left( e_{i_1}, \ldots, e_{i_m}\right)  \right\Vert_{\ell_q}^\rho
\right)^\frac1\rho
\leq C\Vert A\Vert,
\]
where $\rho$ is given by

\begin{itemize}
\item[(i)] If $s \leq q \leq2$, and

\begin{itemize}
\item[(a)] if $0 \leq \onep <\frac1s -\frac1q$, then $\frac{1}{\rho} = \frac12 + \frac1m \left( \frac1s - \frac1q - \onep \right)$.

\item[(b)] if $\frac1s - \frac1q \leq \onep < \frac12 + \frac1s - \frac1q$, then $\frac1\rho = \frac12 + \frac1s - \frac1q - \onep$.
\end{itemize}

\item[(ii)] If $s \leq2 \leq q$, and

\begin{itemize}
\item[(a)] if $0 \leq \onep < \frac1s - \frac12$, then $\frac1\rho = \frac12 + \frac1m \left( \frac1s - \frac12 - \onep\right)$.

\item[(b)] if $\frac1s - \frac12 \leq \onep < \frac1s$, then $\frac1\rho = \frac1s - \onep$.
\end{itemize}

\item[(iii)] If $2 \leq s \leq q$ and $0 \leq \onep < \frac1s$, then $\frac1\rho = \frac1s - \onep$.
\end{itemize}

Moreover, the exponents in the cases (ia),(iib) and (iii) are optimal. Also,
the exponent in (ib) is optimal for $\frac1s-\frac1q\leq\ \onep <\frac12$.
\end{itemize}

Our main intention, in this paper, is to improve the previous theorems in three directions.
\begin{enumerate}
\item We study in depth the remaining cases of the Dimant and Sevilla-Peris result. Surprisingly, we show that
in case (iia), the exponent given above is optimal whereas it is not optimal in case (ib) when $\onep>\frac 12$. We give a better exponent in that case and show a necessary condition on it. These two bounds coincide when $s=1$. We can summarize this into the two following statements.

\begin{theorem}
\label{nov1} Let $\mathbf{p}\in\left[  1,+\infty\right]  ^{m}$ and let
$\rho>0$. Assume moreover that either $q\geq 2$ or $q<2$ and $\onep< \frac 12$. Let

\[
\frac{1}{\lambda}
:= \frac{1}{2}+\frac{1}{s}-\frac{1}{\min\{q,2\}} - \left\vert{\frac{1}{\mathbf{p}}}\right\vert > 0.
\]

Then there exists $C>0$ such that, for every continuous $m$-linear
operator $A:X_{p_{1}}\times\dots\times X_{p_{m}}\rightarrow X_{s}$, we have%
\[
\left(  \sum_{i_{1},\dots,i_{m}=1}^{+\infty}\Vert A(e_{i_{1}},\dots,e_{i_{m}%
})\Vert_{\ell_{q}}^{\rho}\right)  ^{\frac{1}{\rho}}\leq C\Vert A\Vert
\]
if and only if
\[
\frac{m}{\rho}\leq\frac{1}{\lambda}+\frac{m-1}{\max\{\lambda,s,2\}}.
\]

\end{theorem}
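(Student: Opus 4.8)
The plan is to establish the two implications separately: the ``if'' direction is the analytic core, while the ``only if'' direction reduces to exhibiting sharp examples.

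\emph{Sufficiency.} I would first normalise the problem. Since $\|x\|_{\ell_q}\le\|x\|_{\ell_2}$ whenever $q\ge2$, and since both $\lambda$ and the right-hand side depend on $q$ only through $\min\{q,2\}$, it suffices to treat the case $q\le2$; set $\mu:=\max\{\lambda,s,2\}$, so that $\lambda\le\mu$. The inequality to be proved carries a single exponent $\rho$, but the natural object to produce first is the \emph{anisotropic} (iterated mixed-norm) estimate: for every non-decreasing $(\delta_1,\dots,\delta_m)$ in $[\lambda,\mu]$ with $\sum_{k=1}^m\frac1{\delta_k}\le\frac1\lambda+\frac{m-1}\mu$,
\[
\left(\sum_{i_1}\left(\sum_{i_2}\cdots\left(\sum_{i_m}\|A(e_{i_1},\dots,e_{i_m})\|_{\ell_q}^{\delta_m}\right)^{\delta_{m-1}/\delta_m}\cdots\right)^{\delta_1/\delta_2}\right)^{1/\delta_1}\le C\|A\|.
\]
Granting this, the single-exponent statement follows from the elementary contractive inclusion of iterated $\ell_p$-spaces (if $\delta_1\le\cdots\le\delta_m$ and $\frac m\rho\le\sum_k\frac1{\delta_k}$, then $\ell_{\delta_1}(\ell_{\delta_2}(\cdots\ell_{\delta_m}))\hookrightarrow\ell_\rho(\mathbb N^m)$), applied with $\delta_1=\lambda$ and $\delta_2=\cdots=\delta_m=\mu$; this choice is legitimate exactly because $\frac m\rho\le\frac1\lambda+\frac{m-1}\mu$, which moreover forces $\rho\ge\lambda$, hence $\delta_1\le\rho$.

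\emph{The anisotropic estimate.} By Hölder's inequality (interpolating mixed-norm estimates) it is enough to prove the estimate at the vertices of the relevant polytope, i.e.\ at the permutations of $(\lambda,\mu,\dots,\mu)$, the point $(\mu,\dots,\mu)$ being a weaker estimate recovered by interpolating these. For the vertex carrying the small exponent $\lambda$ in the $k$-th slot I would argue by induction on $m$, combining three ingredients: (i) the elementary bound obtained by testing the $k$-th variable against $\ell_{p_k}$-sequences, which controls the $k$-th partial sum with the exponent $p_k^*$ (and $p_k^*\le\lambda$ once $q\le2$); (ii) the Khinchine--Kahane inequality, used to replace fixed unit vectors in the remaining variables by Rademacher averages so that the cotype of $\ell_s$ (cotype $2$ if $s\le2$, cotype $s$ if $s\ge2$) can be exploited; and (iii) the Hardy--Littlewood and vector-valued Bohnenblust--Hille inequalities already available for $(m-1)$ variables with target exponent $\mu$ (the results recalled in the Introduction, in particular those of \cite{abps} and \cite{dsp}). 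Interpolating (i) against the estimate coming from (ii)--(iii) produces exactly the exponent $\lambda$ in slot $k$ and $\mu$ elsewhere; interpolating the $m$ vertex inequalities and descending as above finishes the sufficiency.

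\emph{Necessity and main obstacle.} For the converse, assume the inequality holds with exponent $\rho$ and a dimension-free constant, and test it on explicit $m$-linear operators on $\ell_{p_1}^n\times\cdots\times\ell_{p_m}^n$, letting $n\to\infty$. Three types of examples are relevant, one optimal for each regime of $\mu$: a diagonal operator built from a scalar form extremal for the classical Hardy--Littlewood inequality (forcing $\rho\ge\lambda$ when $\mu=\lambda$), a Kahane--Salem--Zygmund-type random operator (handling $\mu=2$), and an operator of the form $A(x^{(1)},\dots,x^{(m)})=\varphi(x^{(1)},\dots,x^{(m-1)})\,x^{(m)}$ valued in $\ell_s$, which spends one coordinate on the target $\ell_s$ and the remaining $m-1$ on an extremal $\varphi$ (handling $\mu=s>2$, where the poor cotype of $\ell_s$ is what makes this bound binding); optimising over the constructions yields $\frac m\rho\le\frac1\lambda+\frac{m-1}{\max\{\lambda,s,2\}}$. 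I expect the main difficulty to be the anisotropic vertex estimate in the regime $s\ge2$: there one must run Khinchine--Kahane at the exponent $s$ rather than $2$, keep the residual $(m-1)$-linear estimate inside $\ell_s$, and still interpolate coherently against the $\ell_{p_k^*}$ bound while preserving dimension-freeness of the constants. Keeping the exponent bookkeeping consistent across the three regimes $\mu=\lambda$, $\mu=s>2$ and $\mu=2$ (and, on the necessity side, matching each example family to the regime it is sharp for) is where the argument is most delicate.
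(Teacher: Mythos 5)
Your overall architecture---vertex mixed-norm estimates plus interpolation for sufficiency, and random/diagonal examples for necessity---is the same as the paper's (its Theorem \ref{thmain} combined with the case analysis of Section 3), and your list of extremal examples matches theirs in spirit. However, two steps of the sufficiency argument are genuinely broken as written. First, the descent from the anisotropic estimate to the single exponent $\rho$ rests on a false lemma: it is \emph{not} true that $\delta_1\le\cdots\le\delta_m$ and $\frac m\rho\le\sum_k\frac1{\delta_k}$ imply $\ell_{\delta_1}(\ell_{\delta_2}(\cdots\ell_{\delta_m}))\hookrightarrow\ell_\rho(\mathbb N^m)$. Take $m=2$, $(\delta_1,\delta_2)=(1,2)$, $\rho=4/3$ and $a_{ij}=1$ for $i=1$, $j\le n$ (zero otherwise): then $\|a\|_{\ell_1(\ell_2)}=n^{1/2}$ while $\|a\|_{\ell_{4/3}}=n^{3/4}$. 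A single vertex estimate never yields the $\ell_\rho$ bound; one needs \emph{all} $m$ permutations of $(\lambda,\mu,\dots,\mu)$ (most of which are not non-decreasing, hence excluded by the way you stated your anisotropic claim), together with the observation that the diagonal point $t_1=\cdots=t_m=\rho_0$ with $\frac{m}{\rho_0}=\frac1\lambda+\frac{m-1}{\mu}$ is their equal-weight interpolation. That is exactly what the paper does (``choosing $t_1=\cdots=t_m$'' in Theorem \ref{main2}); the repair is short, but the step as you wrote it fails.

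Second, and more fundamentally, your recipe for the vertex estimate (testing against $\ell_{p_k}$, Khinchine--Kahane, cotype of $\ell_s$) cannot produce the exponent $\lambda$ with $\frac1\lambda=\frac12+\frac1s-\frac1{\min\{q,2\}}-\onep$: for $s\le 2$ the cotype of $\ell_s$ is $2$ regardless of $s$, so none of your three ingredients sees the crucial gain $\frac1s-\frac1{\min\{q,2\}}$. That gain comes from the Bennett--Carl theorem, i.e.\ the fact that the inclusion $\ell_s\hookrightarrow\ell_{\min\{q,2\}}$ is $(r,1)$-summing with $\frac1r=\frac12+\frac1s-\frac1{\min\{q,2\}}$; this is precisely the $m=1$ base case of the induction you propose, and invoking the $(m-1)$-linear results of \cite{abps} and \cite{dsp} in your ingredient (iii) makes the argument circular at that point. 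The paper sidesteps this by proving a general statement (Lemma \ref{cot.q} and Theorem \ref{thmain}) for an arbitrary $(r,1)$-summing operator $v$ into a cotype-$q$ space, and only then specializing $v$ to the Bennett--Carl inclusion. Two smaller points: your opening reduction to $q\le 2$ breaks down when $s>2$ (the reduced pair would violate $s\le q$; the paper treats $2\le s\le q$ separately via the $(s,1)$-summing identity of $\ell_s$), and on the necessity side your constructions are in the right spirit, although for $s>2$ the paper uses the simpler diagonal map $\sum_i x^{(1)}_i\cdots x^{(m)}_i e_i$ rather than a tensor of an extremal $(m-1)$-form with the last variable.
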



The following table summarizes the optimal value of $\frac{1}{\rho}$ following
the respective values of $s,q,p_{1},...,p_{m}$:
\[%
\begin{array}
[c]{c|c}%
1\leq s\leq q\leq2,\ \lambda<2 & \displaystyle\frac{1}{2}+\frac{1}{ms}%
-\frac{1}{mq}-\frac{1}{m}\times\left\vert {\frac{1}{\mathbf{p}}}\right\vert
\\[3mm]\hline
& \\
1\leq s\leq q\leq2,\ \lambda\geq2,\ \onep < \frac12 & \displaystyle\frac{1}{2}+\frac{1}{s}%
-\frac{1}{q}-\left\vert {\frac{1}{\mathbf{p}}}\right\vert \\[3mm]\hline
& \\
1\leq s\leq2\leq q,\ \lambda<2 & \displaystyle\frac{m-1}{2m}+\frac{1}%
{ms}-\frac{1}{m}\times\left\vert {\frac{1}{\mathbf{p}}}\right\vert \\[3mm]\hline
& \\
1\leq s\leq2\leq q,\ \lambda\geq2 & \displaystyle\frac{1}{s}-\left\vert
{\frac{1}{\mathbf{p}}}\right\vert \\[3mm]\hline
& \\
2\leq s\leq q & \displaystyle\frac{1}{s}-\left\vert {\frac{1}{\mathbf{p}}%
}\right\vert \\
&
\end{array}
\]

We note that (\ref{juui}) and (\ref{juui2}) are recovered by Theorem \ref{nov1} just by choosing $s=1$ and $q=2$.

When $q<2$ and $\onep>\frac 12$ (observe that this automatically implies $\lambda\geq 2$), the situation
is more difficult and we get the following statement.
\begin{theorem}\label{nov2}
Let $\mathbf{p}\in\left[  1,+\infty\right]  ^{m}$, $\onep>\frac 12$, $1\leq s\leq q\leq 2$ and let
$\rho>0$. Let us consider the following property.
\begin{quote}
There exists $C>0$ such that, for every continuous $m$-linear
operator $A:X_{p_{1}}\times\dots\times X_{p_{m}}\rightarrow X_{s}$, we have%
\[
\left(  \sum_{i_{1},\dots,i_{m}=1}^{+\infty}\Vert A(e_{i_{1}},\dots,e_{i_{m}%
})\Vert_{\ell_{q}}^{\rho}\right)  ^{\frac{1}{\rho}}\leq C\Vert A\Vert.
\]
\end{quote}
\begin{itemize}
\item[(A)] The property is satisfied as soon as 
$$\frac 1\rho\leq \frac{\left(\frac 1s-\frac 1q\right)\left(\frac 1s-\onep\right)}{\frac 12-\frac 1s}.$$
\item[(B)] If the property is satisfied, then
$$\frac 1\rho\leq 2\left(1-\onep\right)\left(\frac 1s-\frac 1q\right).$$
\end{itemize}
\end{theorem}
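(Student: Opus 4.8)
The plan is to prove the sufficiency (A) by an interpolation argument, and the necessity (B) by an explicit randomized construction.

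\textbf{Proof of (A).} If $s=q$ there is nothing to prove (the exponent in (A) is then $\le 0$), so assume $1\le s<q<2$. The starting remark is that, setting
\[
\theta:=\frac{\onep-\frac12}{\frac1s-\frac12}\in(0,1)
\]
— legitimate precisely because $\frac12<\onep<\frac1s$, the only case in which the exponent in (A) is positive — the exponent in (A) equals $(1-\theta)\bigl(\frac1s-\frac1q\bigr)$, i.e. the $\theta$-interpolate of $\frac1{\rho_0}=\frac1s-\frac1q$ (the Dimant--Sevilla-Peris exponent of case (ib) at the admissible boundary value $\onep=\frac12$, using $s<q$) and $\frac1{\rho_1}=0$. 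I would then choose $\mathbf u,\mathbf v\in[1,+\infty]^m$ with $\left|\frac1{\mathbf u}\right|=\frac12$, $\left|\frac1{\mathbf v}\right|=\frac1s$ and $\frac1{p_j}=(1-\theta)\frac1{u_j}+\theta\frac1{v_j}$ for all $j$; a short and elementary verification (using $\frac12<\onep<\frac1s$, $s\ge1$ and $\frac1{p_j}\in[0,1]$) shows such tuples exist. The two inequalities to be interpolated are: the Dimant--Sevilla-Peris inequality for $\mathbf u$, which says that the coefficient-extraction map $A\mapsto\bigl(A(e_{i_1},\dots,e_{i_m})\bigr)_{\bi}$ sends $\mathcal L(X_{u_1},\dots,X_{u_m};X_s)$ boundedly into $\ell_{\rho_0}(\ell_q)$; and the trivial bound $\sup_{\bi}\|A(e_{i_1},\dots,e_{i_m})\|_{\ell_q}\le\sup_{\bi}\|A(e_{i_1},\dots,e_{i_m})\|_{\ell_s}\le\|A\|$, which sends $\mathcal L(X_{v_1},\dots,X_{v_m};X_s)$ boundedly into $\ell_\infty(\ell_q)$. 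Since $X_{p_j}=[X_{u_j},X_{v_j}]_\theta$, Calderón's interpolation theorem identifies $[\mathcal L(X_{u_1},\dots;X_s),\mathcal L(X_{v_1},\dots;X_s)]_\theta$ with $\mathcal L(X_{p_1},\dots,X_{p_m};X_s)$, while $[\ell_{\rho_0}(\ell_q),\ell_\infty(\ell_q)]_\theta=\ell_\rho(\ell_q)$ with $\frac1\rho=\frac{1-\theta}{\rho_0}$; interpolating the coefficient-extraction map between these couples yields exactly the asserted inequality, and monotonicity of $\ell_\rho$-norms then extends it to every smaller $\frac1\rho$.

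\textbf{Proof of (B).} Here I would exhibit, for every large $N$, an operator of small norm whose coefficient array has large $\ell_\rho(\ell_q)$-norm. If $\onep\ge1$ the bound in (B) is $\le 0$, and the diagonal operator $A(e_{i_1},\dots,e_{i_m})=\delta_{i_1=\dots=i_m}\,e_1$ already contradicts the property for every finite $\rho$: by Hölder (using $\onep\ge1$) it has norm $\le1$ on $X_{p_1}^N\times\dots\times X_{p_m}^N$, while its array has $\ell_\rho(\ell_q)$-norm $N^{1/\rho}\to\infty$. So assume $\tfrac12<\onep<1$; put $r:=1/\onep\in(1,2)$ and $M:=\lceil N^{2(1-\onep)}\rceil$, and for signs $(\veps_{i,k})_{i\le N,\,k\le M}$ consider
\[
A\bigl(x^{(1)},\dots,x^{(m)}\bigr):=\Bigl(\sum_{i=1}^N\veps_{i,k}\,x^{(1)}_i\cdots x^{(m)}_i\Bigr)_{k=1}^M\in\ell_s^M .
\]
Its array is diagonal, with $A(e_i,\dots,e_i)=(\veps_{i,k})_{k}$, so the left-hand side of the property equals $N^{1/\rho}M^{1/q}$. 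For the norm, putting $w_i:=x^{(1)}_i\cdots x^{(m)}_i$ and using Hölder with $\sum_j\frac1{p_j}=\frac1r$ (note $p_j\ge r$ for each $j$) shows that $w$ ranges inside $B_{\ell_r^N}$, whence $\|A\|_{X_{p_1}^N\times\dots\times X_{p_m}^N\to\ell_s^M}\le\|R\|_{\ell_r^N\to\ell_s^M}$ with $R=(\veps_{i,k})$. Since $r<2$ and $s\le2$, a Chevet-type estimate (equivalently, Khinchin together with a net argument) provides a choice of signs for which $\|R\|_{\ell_r^N\to\ell_s^M}\lesssim M^{\frac1s-\frac12}\bigl(M^{1/2}+N^{1-\onep}\bigr)\asymp N^{2(1-\onep)/s}$ for our value of $M$. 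Feeding the two estimates into the assumed inequality gives $N^{\frac1\rho+\frac{2(1-\onep)}{q}}\lesssim N^{\frac{2(1-\onep)}{s}}$, and letting $N\to\infty$ yields $\frac1\rho\le2(1-\onep)\bigl(\frac1s-\frac1q\bigr)$.

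\textbf{Expected main obstacle.} In (A) the delicate point is making the interpolation of the spaces of $m$-linear operators fully rigorous (the usual care with $c_0$-endpoints and with density of finitely supported forms), plus the bookkeeping needed to locate $\mathbf u,\mathbf v$. In (B) the crux is the operator-norm bound for the random matrix $R:\ell_r^N\to\ell_s^M$ with $r<2\le s^*$ — i.e. producing a Kahane--Salem--Zygmund/Chevet estimate that holds, for a single choice of signs, uniformly over all of $B_{\ell_r^N}$ — together with verifying that $M\asymp N^{2(1-\onep)}$ is the choice that optimizes the resulting constraint.
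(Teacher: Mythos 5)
Your part (B) is essentially the paper's own proof: the same diagonal random construction, the same reduction via H\"older to a sign matrix $R:\ell_r^N\to\ell_s^M$ (the paper quotes Bennett's estimate for Schur multipliers where you invoke a Chevet-type bound, which gives the same inequality $\|R\|\lesssim\max(M^{1/s},M^{1/s-1/2}N^{1-1/r})$), the same balancing choice $M\asymp N^{2(1-\onep)}$, and the same separate treatment of $\onep\geq 1$. That half is correct.

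Part (A), however, has a genuine gap. Your argument needs the identification
$$\mathcal L(X_{p_1},\dots,X_{p_m};X_s)=\big[\mathcal L(X_{u_1},\dots,X_{u_m};X_s),\ \mathcal L(X_{v_1},\dots,X_{v_m};X_s)\big]_\theta ,$$
and, more precisely, the inclusion of the left-hand side into the right-hand side with norm control: to apply the two endpoint bounds for the coefficient-extraction map to an arbitrary $A$ bounded on $X_{p_1}\times\dots\times X_{p_m}$, you must know that such an $A$ lies in the interpolation space of the two operator spaces. Only the opposite inclusion is a theorem: the (multi)linear interpolation theorem gives $[\mathcal L(\mathbf u),\mathcal L(\mathbf v)]_\theta\hookrightarrow\mathcal L(\mathbf p)$, whereas a general $A\in\mathcal L(\mathbf p)$ need not be the $\theta$-value of an analytic family joining the two endpoint spaces; spaces of multilinear forms are duals of projective tensor products, and complex interpolation does not commute with that construction. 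This is exactly the obstruction the paper circumvents: it first reformulates the property \`a la P\'erez-Garc\'ia--Villanueva (Theorem \ref{THMREFORMULATION}) as boundedness of the \emph{fixed} induced $m$-linear operator $T$ on the \emph{fixed} domain $\ell_{p_1^*,w}^n(Y_1)\times\cdots\times\ell_{p_m^*,w}^n(Y_m)$, and then interpolates only the \emph{codomain}, between $\ell_\infty^{n^m}(X_s)$ (trivial) and $\ell_{\rho}^{n^m}(X_2)$ with $\frac1\rho=\frac1s-\onep$ (Theorem \ref{nov1} with $q=2$); there the Bergh--L\"ofstr\"om multilinear interpolation theorem applies legitimately and produces $\ell_t^{n^m}(X_q)$ with the stated $t$. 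To repair your proof you would have to either establish the missing inclusion for these particular operator spaces (which is not available) or pass through such a reformulation. A minor side remark: the quantity you compute, $(1-\theta)\left(\frac1s-\frac1q\right)=\frac{\left(\frac1s-\frac1q\right)\left(\frac1s-\onep\right)}{\frac1s-\frac12}$, is the intended positive exponent; the denominator $\frac12-\frac1s$ in the displayed statement is a sign slip.
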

In particular, if $s=1$, then the property is satisfied if and only if 
$$\frac 1\rho\leq 2\left(1-\onep\right)\left(1-\frac1q\right).$$

\item We give a simpler proof of the sufficient part of the Dimant and Sevilla-Peris theorem.  It turns out that it is easier to prove a more general result.

\begin{theorem}
\label{main2}Let $\mathbf{p}\in\left[  1,+\infty\right]  ^{m}$ and $1\leq
s\leq q\leq\infty$ be such that
\[
\left\vert \frac{1}{\mathbf{p}}\right\vert <\frac{1}{2}+\frac{1}{s}-\frac
{1}{\min\{q,2\}}.
\]

Let

\[
\frac{1}{\lambda}:=\frac{1}{2}+\frac{1}{s}-\frac{1}{\min\{q,2\}}-\left\vert
\frac{1}{\mathbf{p}}\right\vert.
\]

If $\lambda>0$ and $t_{1},\ldots,t_{m}\in\left[  \lambda,\max\left\{  \lambda,s,2\right\}
\right]  $ are such that
\begin{equation}
\frac{1}{t_{1}}+\cdots+\frac{1}{t_{m}}\leq\frac{1}{\lambda}+\frac{m-1}%
{\max\{\lambda,s,2\}},\label{dez11}%
\end{equation}

then there exists $C>0$ satisfying, for every continuous $m$-linear map
$A:X_{p_{1}}\times\cdots\times X_{p_{m}}\rightarrow X_{s}$,
\begin{equation}
\left(  \sum_{i_{1}=1}^{+\infty}\left(  \ldots\left(  \sum_{i_{m}=1}^{+\infty
}\left\Vert A\left(  e_{i_{1}},\ldots,e_{i_{m}}\right)  \right\Vert
_{\ell_{q}}^{t_{m}}\right)  ^{\frac{t_{m-1}}{t_{m}}}\ldots\right)  ^{\frac{t_{1}%
}{t_{2}}}\right)  ^{\frac{1}{t_{1}}}\leq C\Vert A\Vert. \label{q3}%
\end{equation}
Moreover, the exponents are optimal except eventually if $q\leq 2$ and $\onep>\frac 12$. 
\end{theorem}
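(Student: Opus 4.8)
\emph{Overview.} The plan is to peel the inequality in three stages: reduce the parameters, reduce the whole family \eqref{q3} to finitely many extreme cases, and prove those.

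\emph{Reductions and the shape of the polytope.} Since $\|x\|_{\ell_q}\le\|x\|_{\ell_{q'}}$ whenever $q'\le q$, I may replace $q$ by $\min\{q,2\}$ if $s\le 2$ and by $s$ if $s\ge 2$; this leaves $\lambda$ and $\max\{\lambda,s,2\}$ unchanged (and, up to the constant, the left side of \eqref{q3}), so afterwards either $s\le q\le 2$ with $\frac1\lambda=\frac12+\frac1s-\frac1q-\onep$, or $2\le s=q$ with $\frac1\lambda=\frac1s-\onep$. Put $\mu:=\max\{\lambda,s,2\}$; the standing hypotheses force $\mu\in\{\lambda,2\}$ (indeed $s\ge 2$ already gives $\lambda\ge s$, so $\mu=2$ can happen only when $s\le q\le 2$ and $\lambda<2$). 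In the coordinates $a_j:=\frac1{t_j}-\frac1\mu$ the constraint \eqref{dez11} together with $t_j\in[\lambda,\mu]$ reads $a_j\ge 0$, $\sum_j a_j\le\frac1\lambda-\frac1\mu$, the constraint $t_j\ge\lambda$ being automatic on the face $\sum_j a_j=\frac1\lambda-\frac1\mu$. That face is thus the $(m-1)$-simplex with the $m$ vertices $V^{(1)},\dots,V^{(m)}$, where $V^{(k)}$ has $k$-th entry $\lambda$ and all others $\mu$; and since $\mu\ge\lambda$, the exponents of any admissible $(t_j)$ dominate those of some point of this face.

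\emph{Reduction to the vertices.} The iterated $\ell_p$-norm in \eqref{q3} is non-increasing in each $t_j$ (apply $\|\cdot\|_{\ell_t}\le\|\cdot\|_{\ell_{t'}}$, $t\ge t'$, coordinate by coordinate) and it interpolates multilinearly: by the generalized H\"older inequality for mixed $\ell_p$-norms, if $\frac1{t_j}=\sum_l\theta_l/r^{(l)}_j$ with $\theta_l\ge 0$, $\sum_l\theta_l=1$, then $\|\cdot\|_{(t_1,\dots,t_m)}\le\prod_l\|\cdot\|_{(r^{(l)}_1,\dots,r^{(l)}_m)}^{\theta_l}$. Combining these two facts, it suffices to prove \eqref{q3} for the vertices $V^{(1)},\dots,V^{(m)}$. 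When $\mu=\lambda$ (this covers $s\ge 2$, and $s\le q\le 2$ with $\lambda\ge 2$) every vertex equals $(\lambda,\dots,\lambda)$ and \eqref{q3} is the single-exponent estimate $\big(\sum_{\mathbf i}\|A(e_{\mathbf i})\|_{\ell_q}^\lambda\big)^{1/\lambda}\le C\|A\|$, i.e. cases (ib) and (iii) of Dimant--Sevilla-Peris, for which a short proof runs through a linear Hardy--Littlewood inequality (via the cotype of $\ell_s$, the inclusion $\ell_s\hookrightarrow\ell_q$, and --- to bring in the $p_j$ --- a one-variable-at-a-time duality with norm-one vectors of $\ell_{p_j}$).

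\emph{The new vertex inequalities, and optimality.} The genuinely new situation is $s\le q\le 2$, $\mu=2$, where one must bound $\big\|\big(\|A(e_{\mathbf i})\|_{\ell_q}\big)_{\mathbf i}\big\|_{V^{(k)}}$, with $V^{(k)}$ carrying the exponent $\lambda<2$ in slot $k$ and $2$ elsewhere. Probabilistic core: for the $m-1$ indices $j\ne k$, the cotype-$2$ property of $\ell_s$ ($s\le 2$), fed in through the multilinear Khinchin--Kahane inequality exactly as in the classical Bohnenblust--Hille proof, replaces the $\ell_2$-sums over those indices by an average, over the Rademacher variables, of $\|A(\cdots,e_{i_k},\cdots)\|_{\ell_q}$ with Rademacher sums in the variables $j\ne k$; the exponent $2$ and the value $\frac1{\lambda_\infty}:=\frac12+\frac1s-\frac1q$ come out of the cotype-$2$ constant together with the gap $\frac1s-\frac1q$. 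Slot $k$ is then handled by H\"older's inequality, which is also the point where the $p_j$'s enter: after normalising the Rademacher sums in the spaces $\ell_{p_j}$ and using $\|A\|$, a H\"older count over the indices produces precisely the exponent $\lambda$, the identity $\frac1\lambda=\frac1{\lambda_\infty}-\onep$ being exactly what makes the bookkeeping close. Finally, optimality --- outside the range $q\le 2$, $\onep>\tfrac12$, where necessarily $\lambda\ge 2$ and one is in the setting of \thmref{nov2} --- follows by testing against tensor products of standard extremisers: a Kahane--Salem--Zygmund random $m$-linear form for the ``$\ell_2$-directions'', diagonal multipliers $\ell_{p_j}\to\ell_{p_j}$ realising the parameters $p_j$, and a sharp operator into $\ell_s$ for the target; comparing the two sides on the boundary face of \eqref{dez11} shows that no exponent there can be lowered.

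\emph{Main obstacle.} The crux is the last stage: the vertex estimate with the exponent $\lambda$ in an arbitrary slot and an arbitrary $\mathbf p$. It is tempting to reduce to $c_0^m$ and then recover the $p_j$'s from the diagonal substitution $B(x^{(1)},\dots,x^{(m)})=A(x^{(1)}y^{(1)},\dots,x^{(m)}y^{(m)})$ by taking suprema over the $y^{(j)}$; but this supremum interacts cleanly only with the \emph{outermost} index of the mixed norm --- for the inner indices it yields an inequality in the wrong direction --- so the $p_j$-dependence has to be built into the Khinchin--H\"older step itself. The second delicacy is that \eqref{q3} fixes the order of the iterated norm while the ``bad'' slot $k$ ranges over all positions, so Minkowski's inequality (available because $\lambda<2$), the cotype estimate, and the $p_j$-weights must be organised consistently throughout.
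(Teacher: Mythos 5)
Your global architecture coincides with ours: reduce \eqref{q3}, by monotonicity in each exponent and by mixed-norm H\"older/interpolation, to the $m$ vertices $(\mu,\dots,\mu,\lambda,\mu,\dots,\mu)$ of the boundary face of \eqref{dez11}; prove those vertex estimates by a cotype/Khinchine argument with a H\"older step absorbing the $p_j$'s (this is our Lemma \ref{cot.q} and Theorem \ref{thmain}); and obtain optimality from Kahane--Salem--Zygmund-type random constructions combined with diagonal maps. The polytope bookkeeping, the reduction to the face $\sum_j 1/t_j=1/\lambda+(m-1)/\mu$, and the limiting device needed when $\lambda\geq 2$ are all sound.

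The gap is in the vertex estimate itself, which is where the content of the theorem lives. You assert that the slot-$k$ exponent $\frac12+\frac1s-\frac1q$ ``comes out of the cotype-$2$ constant together with the gap $\frac1s-\frac1q$'' and that slot $k$ is then ``handled by H\"older's inequality''. That is not enough. Cotype $2$ (of the space $\ell_q$, $q\leq 2$, in which the norms are taken --- not of $\ell_s$) together with Khinchine--Kahane governs only the $m-1$ indices carrying the exponent $\mu$. The distinguished slot $k$ requires the fact that the inclusion $\ell_s\hookrightarrow\ell_q$ is $(r,1)$-summing with $\frac1r=\frac12+\frac1s-\frac1{\min\{q,2\}}$ --- the Bennett--Carl theorem --- applied to the linear map obtained by freezing the other variables; H\"older in that slot only serves to convert $r$ into $\lambda$ via the $\ell_{p_k}$-normalization. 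This summing property is a genuine external input: for $s=1$ and $q=2$ it is Grothendieck's theorem, so no amount of cotype and Khinchine bookkeeping will produce it, and without it the gain $\frac1s-\frac1q$ in the exponent is simply lost. Once Bennett--Carl is inserted (and the inductive argument of \cite[Proposition 3.1]{dsp}, our Lemma \ref{cot.q}, is carried out), your outline closes. A smaller remark on optimality: the statement needs matching lower bounds in several genuinely different regimes (a full $m$-linear random form when $q\leq2$ and $\lambda<2$; a random \emph{linear} map composed with the diagonal when $q\leq 2$, $\lambda\geq2$ and $\onep\leq\frac12$; an $(m-1)$-linear random form when $s\leq2\leq q$ and $\lambda<2$; and a purely deterministic diagonal map otherwise), and ``tensor products of standard extremisers'' is too coarse to certify each face of \eqref{dez11}.
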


\begin{remark}
\bigskip The optimality in the above theorem shall be understood in a strong
sense: when $\lambda<2,$ we prove that if $t_{1},\ldots,t_{m}\in\left[
1,+\infty\right)  $ are so that (\ref{q3}) holds then (\ref{dez11}) is
valid. When $\lambda\geq2$, note that $\lambda=\max\left\{  \lambda
,s,2\right\}  $ and we prove that if $t=t_{1}=\cdots=t_{m}$ are in $\left[
1,+\infty\right)$ and (\ref{q3}) is valid, then we have (\ref{dez11}) and, as a direct consequence, $t\geq\lambda$.
\end{remark}

\item We prove similar results for $m$-linear mappings with arbitrary codomains which assume their cotype. For a Banach space $X$, let $q_X=\inf\{q\geq 2;\ X$ has cotype $q\}.$

 The proof that $(B)$ implies $(A)$ in the theorem below appears in \cite[Proposition 4.3]{dsp}.
\begin{theorem}\label{000aaa}
Let $\mathbf{p\in}\left[  2,{+\infty}\right]  ^{m}$, let $X$ be an infinite
dimensional Banach space with cotype $q_{X}$,  $\left\vert \frac{1}{\mathbf{p}}\right\vert <\frac{1}{q_{X}}$, and let $\rho>0.$ The following
assertions are equivalent:

(A) Every  bounded $m$-linear operator $A:X_{p_{1}}\times\dots\times X_{p_{m}%
}\rightarrow X$ is such that%
\[
\sum\limits_{i_{1},...,i_{m}=1}^{{+\infty}}\left\| A(e_{i_{^{1}}}%
,...,e_{i_{m}})\right\| ^{\rho}<{+\infty}.
\]

(B) $\frac{1}{\rho}\leq\frac{1}{q_{X}}-\left\vert \frac{1}{\mathbf{p}%
}\right\vert .$
\end{theorem}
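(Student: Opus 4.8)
Since the implication $(B)\Rightarrow(A)$ is already contained in \cite[Proposition 4.3]{dsp}, the plan is to prove $(A)\Rightarrow(B)$, i.e.\ the optimality of the exponent $\rho$. The first step is to upgrade $(A)$ into a uniform estimate. Consider the map $\Psi$ sending a bounded $m$-linear operator $A:X_{p_{1}}\times\dots\times X_{p_{m}}\to X$ to the family $\big(A(e_{i_{1}},\dots,e_{i_{m}})\big)_{i_{1},\dots,i_{m}}$; this map is \emph{linear}, and by $(A)$ it takes values in the vector-valued sequence space $\ell_{\rho}(\mathbb{N}^{m};X)$, a complete metrizable topological vector space. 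A routine verification shows that $\Psi$ has closed graph (operator-norm convergence forces coordinatewise convergence of the values on basis vectors, and so does convergence in $\ell_{\rho}(\mathbb{N}^{m};X)$), so the closed graph theorem provides a constant $C>0$ with
\[
\Big(\sum_{i_{1},\dots,i_{m}=1}^{+\infty}\|A(e_{i_{1}},\dots,e_{i_{m}})\|^{\rho}\Big)^{1/\rho}\le C\,\|A\|
\]
for every bounded $m$-linear $A:X_{p_{1}}\times\dots\times X_{p_{m}}\to X$.

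The second step is to localize and choose a good test operator. Since $X$ is infinite dimensional and $q_{X}=\inf\{q\ge2:X\text{ has cotype }q\}$, the Maurey--Pisier theorem guarantees that $\ell_{q_{X}}$ is finitely representable in $X$: for each $n$ there is a linear $j_{n}:\ell_{q_{X}}^{n}\to X$ with $\|y\|_{q_{X}}\le\|j_{n}y\|_{X}\le 2\|y\|_{q_{X}}$. Thus it suffices, for each $n$, to produce an $m$-linear operator into $\ell_{q_{X}}^{n}$ whose values on basis vectors are large relative to its norm, and then transport it to $X$ by $j_{n}$. I would use the diagonal operator $B_{n}:\ell_{p_{1}}^{n}\times\dots\times\ell_{p_{m}}^{n}\to\ell_{q_{X}}^{n}$, $B_{n}(x^{(1)},\dots,x^{(m)})=\big(x^{(1)}_{j}\cdots x^{(m)}_{j}\big)_{j=1}^{n}$. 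Because $\mathbf{p}\in[2,\infty]^{m}$ and $\onep<\tfrac1{q_{X}}$ we have $q_{X}\onep<1$, so Hölder's inequality with exponents $p_{1}/q_{X},\dots,p_{m}/q_{X}$ and the remaining conjugate exponent gives $\|B_{n}(x)\|_{q_{X}}\le n^{\frac1{q_{X}}-\onep}\prod_{k}\|x^{(k)}\|_{p_{k}}$, and testing on the all-ones vectors shows this is sharp; hence $\|B_{n}\|=n^{\frac1{q_{X}}-\onep}$. Moreover $B_{n}(e_{i_{1}},\dots,e_{i_{m}})=e_{i_{1}}$ when $i_{1}=\dots=i_{m}$ and $0$ otherwise.

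The third step assembles the pieces. Set $A_{n}:=j_{n}\circ B_{n}$, extended to $X_{p_{1}}\times\dots\times X_{p_{m}}$ by precomposing with the norm-one coordinate projections $X_{p_{k}}\to\ell_{p_{k}}^{n}$. Then $\|A_{n}\|\le 2\,n^{\frac1{q_{X}}-\onep}$, while the $n$ diagonal tuples contribute at least $1$ each, so $\sum_{i_{1},\dots,i_{m}}\|A_{n}(e_{i_{1}},\dots,e_{i_{m}})\|^{\rho}\ge n$. Plugging $A_{n}$ into the uniform estimate of Step~1 yields $n^{1/\rho}\le 2C\,n^{\frac1{q_{X}}-\onep}$ for every $n$, and letting $n\to\infty$ forces $\tfrac1\rho\le\tfrac1{q_{X}}-\onep$, which is exactly $(B)$ (and shows in passing that $\rho\ge q_{X}\ge2$).

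The bulk of this is elementary; the one point requiring care is the reduction in Step~2 from the abstract space $X$ — which need not contain an isomorphic copy of $\ell_{q_{X}}$, nor even have cotype $q_{X}$ — to the concrete finite-dimensional spaces $\ell_{q_{X}}^{n}$. The whole argument is arranged so that only the finite (local) representability of $\ell_{q_{X}}$ in $X$ is used, which is precisely what the Maurey--Pisier theorem supplies; the rest is the norm computation for the diagonal map, which is the same Hölder inequality that underlies the sufficiency direction and the classical $2\le s\le q$ cases in Theorem~\ref{nov1}.
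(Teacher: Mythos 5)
Your argument is correct and follows essentially the same route as the paper: the implication $(B)\Rightarrow(A)$ is delegated to \cite[Proposition 4.3]{dsp}, and for $(A)\Rightarrow(B)$ the paper likewise invokes the Maurey--Pisier theorem to place an almost-isometric copy of $\ell_{q_X}^n$ inside $X$, tests the (closed-graph) uniform inequality on the same diagonal $m$-linear map, and bounds its norm by $2n^{\frac{1}{q_X}-\left\vert\frac{1}{\mathbf p}\right\vert}$ via the identical H\"older computation. The only difference is that you make explicit the closed graph step upgrading $(A)$ to a uniform estimate, which the paper leaves implicit.
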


\end{enumerate}

Finally, in the last section of the paper we obtain better estimates for the constants of vector-valued Bohnenblust--Hille inequalities.

\smallskip

We conclude this introduction by noting that our theorems can be naturally stated in the context of homogeneous
polynomials. Given an $m$-homogeneous polynomial $P:X\to Y$, we denote its coefficients $(c_\alpha(P))$. 
In \cite[Lemma 5]{df}, it is shown that an inequality
$$\left(\sum_{\alpha}\|c_\alpha(P)\|^\rho\right)^{\frac 1\rho}\leq C\|P\|$$
holds for every $m$-homogeneous polynomial $P:X\to Y$ if and only if a similar inequality 
$$\left(\sum_{i_1,\dots,i_m}\|T(e_{i_1},\dots,e_{i_m})\|^\rho\right)^{\frac 1\rho}\leq C'\|T\|$$
is satisfied for every $m$-linear mapping $A:X\times\dots\times X\to Y$, where $X$ is a Banach sequence space. 

\medskip

\noindent\textsc{Notations.} For two positive integers $n,k$, we set
 \begin{eqnarray*}
\mkn{k}&:=&\big\{\bi=(i_1,\dots,i_k); \ i_1,\dots,i_k\in\{1,\dots,n\}\big\}.\\
\end{eqnarray*}
For $q\in [1,+\infty]$, $q^*$ will denote its conjugate exponent.

\section{Proof of Theorem \ref{main2} (sufficiency)\label{88}}

\bigskip Let $1\leq q\leq{+\infty}$. We recall that a Banach space $X$ has
\emph{cotype} $q$ if there is a constant $\kappa>0$ such that, no matter how
we select finitely many vectors $x_{1},...,x_{n}\in X$,
\[
\left(  \sum_{k=1}^{n}\Vert x_{k}\Vert^{q}\right)  ^{\frac{1}{q}}\leq
\kappa\left(  \int_{I}\left\Vert \sum_{k=1}^{n}r_{k}(t)x_{k}\right\Vert
^{2}dt\right)  ^{\frac{1}{2}}
\]
where $I=[0,1]$ and $r_{k}$ denotes the $k$-th Rademacher function. To cover
the case $q={+\infty}$, the left hand side should be replaced by $\max_{1\leq
k\leq n}\Vert x_{k}\Vert$. The smallest of all these constants is denoted by
$C_{q}(X)$ and named the cotype $q$ constant of $X$.

An operator between Banach spaces $v:X\to Y$ is $(r,s)$-summing (with $s\leq
r\leq{+\infty}$) if there exists $C>0$ such that, for all $n\geq1$ and for
all vectors $x_{1},\dots,x_{n}\in X$,
\[
\left(  \sum_{k=1}^{n} \|vx_{k}\|^{r}\right)  ^{\frac1r}\leq C\sup_{x^{*}\in
B_{X^{*}}}\left(  \sum_{k=1}^{n}|x^{*}(x_{k})|^{s}\right)  ^{\frac1s}.
\]
The smallest constant in this inequality is denoted by $\pi_{r,s}(v)$.

We need a cotype $q$ version of \cite[Proposition 4.1]{abps}, whose proof can
be found in \cite[Proposition 3.1]{dsp}:

\begin{lemma}
\label{cot.q} Let $X$ be a Banach space, let $Y$ be a cotype $q$ space, let $r\in[1,q]$ and let $\mathbf p\in [1,+\infty]^m$
with
\[
\left\vert {\frac{1}{\mathbf{p}}}\right\vert <\frac{1}{r}-\frac{1}{q}.
\]
Define
\[
\frac{1}{\lambda}:=\frac{1}{r}-\left\vert {\frac{1}{\mathbf{p}}}\right\vert .
\]
Then, for every continuous $m$-linear map $A:X_{p_{1}}\times\cdots\times
X_{p_{m}}\rightarrow X$ and every $\left(  r,1\right)  $-summing operator
$v:X\rightarrow Y$, we have
\begin{equation}
\left(  \sum_{i_{k}}\left(  \sum_{\widehat{i_{k}}}\Vert vA(e_{i_{1}}%
,\cdots,e_{i_{m}})\Vert_{Y}^{q}\right)  ^{\lambda/q}\right)  ^{1/\lambda}%
\leq\left(  \sqrt{2}C_{q}(Y)\right)  ^{m-1}\pi_{r,1}(v)\Vert A\Vert\label{q2}%
\end{equation}
for all $k=1,...,m.$
\end{lemma}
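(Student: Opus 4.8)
The plan is to disentangle the two mechanisms responsible for summability: the $(r,1)$-summing property of $v$, which produces summability in the one distinguished index $i_k$, and the cotype $q$ of $Y$, which produces summability (with exponent $q$) in the remaining $m-1$ indices. The argument below is identical for every $k\in\{1,\dots,m\}$, so to fix notation I take $k=m$; and since the left‑hand side of \eqref{q2} is the increasing limit of its truncations, it is enough to bound the sums over $i_1,\dots,i_m\in\{1,\dots,n\}$ by a constant independent of $n$. The two tools I would use repeatedly are the defining inequality of $\pi_{r,1}(v)$ and the cotype $q$ inequality of $Y$ in its \emph{$L^1$-form} — obtained by combining the definition of $C_q(Y)$ with Kahane's inequality, whose best $L^2/L^1$ constant is $\le\sqrt2$: for any finitely many $y_i\in Y$,
\[
\Big(\sum_i\|y_i\|_Y^{q}\Big)^{1/q}\le\sqrt2\,C_q(Y)\,\int_I\Big\|\sum_i r_i(t)y_i\Big\|_Y\,dt .
\]

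First I would prove the estimate in the model case $X_{p_1}=\dots=X_{p_m}=c_0$, i.e.\ $\lambda=r$. Fix $i_m$. Applying the $L^1$-form of cotype successively in the variables $i_1,i_2,\dots,i_{m-1}$ — each application producing a factor $\sqrt2\,C_q(Y)$ and replacing $e_{i_j}$ by $\varepsilon_j(t_j):=\sum_{i_j}r_{i_j}(t_j)e_{i_j}$, and each time using Minkowski's inequality ($q\ge1$) to commute the remaining $\ell_q$-sums with the new integral — one gets
\[
\Big(\sum_{i_1,\dots,i_{m-1}}\|vA(e_{i_1},\dots,e_{i_m})\|_Y^{q}\Big)^{1/q}\le\big(\sqrt2\,C_q(Y)\big)^{m-1}\int_{I^{m-1}}\big\|vA\big(\varepsilon_1(t_1),\dots,\varepsilon_{m-1}(t_{m-1}),e_{i_m}\big)\big\|_Y\,dt .
\]
Raising to the power $r$, summing over $i_m$, and using Minkowski once more to pull the integral outside the $\ell_r$-sum over $i_m$, the problem reduces to estimating, for each fixed $(t_1,\dots,t_{m-1})$,
\[
\Big(\sum_{i_m}\|vA(\varepsilon_1(t_1),\dots,\varepsilon_{m-1}(t_{m-1}),e_{i_m})\|_Y^{r}\Big)^{1/r}.
\]
Here I would apply the $(r,1)$-summing property of $v$ to the $n$ vectors $A(\varepsilon_1(t_1),\dots,\varepsilon_{m-1}(t_{m-1}),e_{i_m})\in X$; the supremum that appears is
\[
\sup_{\phi\in B_{X^*}}\sum_{i_m}\big|\phi\big(A(\varepsilon_1,\dots,\varepsilon_{m-1},e_{i_m})\big)\big|=\sup_{\phi\in B_{X^*}}\big\|\phi\circ A(\varepsilon_1,\dots,\varepsilon_{m-1},\cdot)\big\|_{(c_0)^*}\le\|A\|\prod_{j<m}\|\varepsilon_j(t_j)\|_{c_0}=\|A\|,
\]
since each $\varepsilon_j(t_j)$ has $c_0$-norm $1$. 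Combining the three displays gives \eqref{q2} with $\lambda=r$ and with exactly the announced constant $\big(\sqrt2\,C_q(Y)\big)^{m-1}\pi_{r,1}(v)$.

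The passage to general $X_{p_j}$ is the delicate point, and it is where the exponent $\lambda$ with $\frac1\lambda=\frac1r-\onep$ enters; this is precisely the content of \cite[Proposition 4.1]{abps} and its cotype $q$ refinement \cite[Proposition 3.1]{dsp}. The idea is to run the argument above with $e_{i_j}$ replaced in each slot $j$ by $b^{(j)}_{i_j}e_{i_j}$ for an arbitrary nonnegative sequence $b^{(j)}$ with $\|b^{(j)}\|_{\ell_{p_j}}\le1$ — so that the diagonal map $c_0\to\ell_{p_j}$, $e_i\mapsto b^{(j)}_i e_i$, has norm $\le1$ and the composed $m$-linear map has norm $\le\|A\|$ — thereby obtaining a whole family of inequalities, and then to optimize over the weights $b^{(1)},\dots,b^{(m)}$ using the exact description of the norms of diagonal operators between $\ell$-spaces together with an anisotropic Hölder inequality for mixed-norm sequence spaces. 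I expect the reductions and the cotype/Kahane/$(r,1)$-summing part above to be routine; the main obstacle is exactly this last bookkeeping: checking that the total slack $\onep$ produced by the $m$ weight sequences is converted into a decrease, from $r$ to $\lambda$, of the exponent attached to the distinguished index $i_k$, while the constant is not degraded below $\big(\sqrt2\,C_q(Y)\big)^{m-1}\pi_{r,1}(v)$. The case $m=1$ already displays the mechanism in clean form: there $\big(\sum_i\|vAe_i\|_Y^{r}\,b_i^{r}\big)^{1/r}\le\pi_{r,1}(v)\|A\|$ for every $b$ in the unit ball of $\ell_{p_1}$ (by $(r,1)$-summing and Hölder), and taking the supremum over such $b$ turns the left-hand side into $\big\|(\|vAe_i\|_Y)_i\big\|_{\ell_\lambda}$ with $\frac1\lambda=\frac1r-\frac1{p_1}$.
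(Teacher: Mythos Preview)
Your approach is correct and coincides with the paper's: the paper does not actually prove this lemma but simply refers to \cite[Proposition~3.1]{dsp} (the cotype-$q$ refinement of \cite[Proposition~4.1]{abps}), and the argument you sketch --- cotype plus Kahane in $L^1$-form on the $m-1$ non-distinguished indices, then the $(r,1)$-summing property on the distinguished one, followed by the weight/duality trick to pass from $c_0$ to $X_{p_j}$ --- is exactly the proof contained in those references. Your write-up is in fact more detailed than anything the paper itself supplies.
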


The symbol $\sum_{\widehat{i_{k}}}$ means that we are fixing the $k$-th index
and that we are summing over all the remaining indices.

We shall deduce from this lemma the following theorem, which extends results
of \cite{abps} and \cite{dsp}:

\begin{theorem}
\label{thmain} Let $\mathbf{p}\in\left[  1,+\infty\right]  ^{m}$, $X$ be a
Banach space, $Y$ be a cotype $q$ space and $1\leq r\leq q$, with $\left\vert
\frac{1}{\mathbf{p}}\right\vert <\frac{1}{r}$. Define
\[
\frac{1}{\lambda}:=\frac{1}{r}-\left\vert \frac{1}{\mathbf{p}}\right\vert .
\]
If $t_{1},\ldots,t_{m}\in\left[  \lambda,\max\left\{  \lambda,q\right\}
\right]  $ are such that%
\[
\frac{1}{t_{1}}+\cdots+\frac{1}{t_{m}}\leq\frac{1}{\lambda}+\frac{m-1}%
{\max\{\lambda,q\}},
\]
then, for every continuous $m$-linear map $A:X_{p_{1}}\times\cdots\times
X_{p_{m}}\rightarrow X$ and every $\left(  r,1\right)  $-summing operator
$v:X\rightarrow Y$, we have {\small
\begin{equation}
\left(  \sum_{i_{1}=1}^{{+\infty}}\left(  \ldots\left(  \sum_{i_{m}=1}^{{+\infty}
}\left\Vert vA\left(  e_{i_{1}},\ldots,e_{i_{m}}\right)  \right\Vert
_{Y}^{t_{m}}\right)  ^{\frac{t_{m-1}}{t_{m}}}\ldots\right)  ^{\frac{t_{1}%
}{t_{2}}}\right)  ^{\frac{1}{t_{1}}}\leq\left(  \sqrt{2}C_{\max\left\{
\lambda,q\right\}  }(Y)\right)  ^{m-1}\pi_{r,1}(v)\left\Vert A\right\Vert .
\label{q1}%
\end{equation}
}
\end{theorem}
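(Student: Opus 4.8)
The plan is to deduce Theorem~\ref{thmain} from Lemma~\ref{cot.q} by an interpolation/mixed-norm argument on the indices. First I would observe that Lemma~\ref{cot.q} gives us, for \emph{each} fixed position $k\in\{1,\dots,m\}$, control of the mixed norm in which the $k$-th index carries exponent $\lambda$ and all the other indices carry exponent $q$:
\[
\left(\sum_{i_k}\left(\sum_{\widehat{i_k}}\|vA(e_{i_1},\dots,e_{i_m})\|_Y^q\right)^{\lambda/q}\right)^{1/\lambda}\leq \left(\sqrt2\,C_q(Y)\right)^{m-1}\pi_{r,1}(v)\|A\|.
\]
These $m$ estimates are the building blocks. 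The target inequality \eqref{q1} is an iterated $\ell_{t_1}(\ell_{t_2}(\cdots\ell_{t_m}))$-norm, with the $t_j$ lying in the interval $[\lambda,\max\{\lambda,q\}]$ and satisfying the budget constraint $\sum_j 1/t_j\leq 1/\lambda+(m-1)/\max\{\lambda,q\}$. The natural strategy is to realise the vector $(1/t_1,\dots,1/t_m)$ as a convex combination of the $m$ ``extreme'' exponent vectors associated to the $m$ applications of Lemma~\ref{cot.q}, together with possibly one more vector, and then to interpolate.

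Concretely, I would split into the two regimes $\lambda\geq q$ and $\lambda<q$. When $\lambda\geq q$ we have $\max\{\lambda,q\}=\lambda$, the constraint reads $\sum_j 1/t_j\leq m/\lambda$, and each $t_j\geq\lambda$; here the $k$-th application of Lemma~\ref{cot.q} already dominates the full $\ell_\lambda$-in-all-indices norm (since $q\leq\lambda$ lets us pass from the inner $\ell_q$ to $\ell_\lambda$ for free, Minkowski going the favourable way), so a single use of the lemma plus monotonicity of $\ell_p$-norms in $p$ handles every admissible $(t_1,\dots,t_m)$. When $\lambda<q$, so $\max\{\lambda,q\}=q$, the $k$-th estimate is exactly an $\ell_\lambda$ in the $k$-th slot and $\ell_q$ in the remaining $m-1$ slots. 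I would then use the standard fact (a Minkowski/Hölder interpolation for iterated mixed norms, of the type already exploited in \cite{abps}) that if the reciprocal-exponent vector $\theta:=(1/t_1,\dots,1/t_m)$ can be written as
\[
\theta=\sum_{k=1}^m \mu_k\,\theta^{(k)},\qquad \mu_k\geq0,\ \sum_k\mu_k=1,
\]
where $\theta^{(k)}$ is the vector with $1/\lambda$ in coordinate $k$ and $1/q$ elsewhere, then the iterated $\theta$-norm is bounded by the product over $k$ of the $k$-th mixed norm raised to the power $\mu_k$ --- hence by $(\sqrt2\,C_q(Y))^{m-1}\pi_{r,1}(v)\|A\|$, since each factor enjoys that bound. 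The point is that the $\theta^{(k)}$ are precisely the vertices of the polytope $\{x\in[1/q,1/\lambda]^m:\ \sum_j x_j=1/\lambda+(m-1)/q\}$ (one coordinate at $1/\lambda$, the rest at $1/q$), so the budget constraint plus the range constraint $t_j\in[\lambda,q]$ says exactly that $\theta$ lies in (the down-set of) this polytope; monotonicity in the exponents reduces the general admissible $\theta$ to a point of the polytope itself, and Carathéodory/convexity then gives the required representation.

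The main obstacle, and the step deserving care, is the mixed-norm interpolation lemma itself: one must check that Hölder's inequality can indeed be applied \emph{iteratively}, index by index from the innermost sum outward, to pass from the $m$ estimates with exponent vectors $\theta^{(k)}$ to the single estimate with exponent vector $\sum_k\mu_k\theta^{(k)}$, and that no constant worse than the product of the individual constants is incurred (this is why the bound in \eqref{q1} has exactly the same shape as in \eqref{q2}, with no extra $m$-dependent factor). This requires writing, at each level, the summand as a product of $m$ factors and applying the generalized Hölder inequality with exponents $1/\mu_k$; the interchange of sum and the order of the iterated norms is where a Minkowski-type inequality is needed, and it is legitimate precisely because we only ever move from a larger inner exponent to a smaller outer exponent (the direction in which iterated-$\ell_p$ norms behave well). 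Once this lemma is in place, the deduction of Theorem~\ref{thmain} from Lemma~\ref{cot.q} is immediate, and Theorem~\ref{main2} will then follow by specialising $X=X_s$, $Y=\ell_q$ (or $c_0$), $v=\mathrm{id}$ or a suitable summing inclusion, and choosing $r$ so that $1/\lambda$ matches $\tfrac12+\tfrac1s-\tfrac1{\min\{q,2\}}-|\tfrac1{\mathbf p}|$, using the classical $(r,1)$-summing properties of the identity on $\ell_s$-type spaces.
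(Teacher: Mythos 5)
Your treatment of the case $\lambda<q$ is essentially the paper's own proof: apply Lemma~\ref{cot.q} once for each position $k$, use the mixed-norm Minkowski inequality (valid because $\lambda<q$ puts the smaller exponent outside in the lemma's conclusion) to place the exponent $\lambda$ in the $k$-th slot of the iterated norm, and then cover the whole face $\sum_j 1/t_j=1/\lambda+(m-1)/q$, $t_j\in[\lambda,q]$, by interpolating the $m$ vertices $(q,\dots,q,\lambda,q,\dots,q)$ in the sense of \cite{abps} (equivalently, a generalized H\"older inequality for iterated mixed norms, \cite[Theorem 2.1]{fou}); the reduction of the inequality constraint to the equality case by monotonicity of the exponents is also fine, and no constant beyond $(\sqrt2\,C_q(Y))^{m-1}\pi_{r,1}(v)$ is lost since all $m$ building blocks carry the same constant.

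The gap is in the case $\lambda\geq q$. There you invoke ``the $k$-th application of Lemma~\ref{cot.q}'', but the hypothesis of that lemma is $\left\vert\frac{1}{\mathbf{p}}\right\vert<\frac{1}{r}-\frac{1}{q}$, which is precisely the statement $\frac{1}{\lambda}>\frac{1}{q}$, i.e.\ $\lambda<q$. So in the regime $\lambda\geq q$ the lemma simply does not apply with the cotype index $q$, and your ``single use of the lemma plus monotonicity of $\ell_p$-norms'' has no starting point; Theorem~\ref{thmain} only assumes $\left\vert\frac{1}{\mathbf{p}}\right\vert<\frac{1}{r}$, which is strictly weaker. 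The paper repairs this by exploiting that a cotype $q$ space also has cotype $q_\varepsilon:=\lambda+\varepsilon$ for every $\varepsilon>0$; with this larger index the hypothesis $\left\vert\frac{1}{\mathbf{p}}\right\vert<\frac{1}{r}-\frac{1}{q_\varepsilon}$ does hold, Lemma~\ref{cot.q} then yields the mixed $(\ell_\lambda,\ell_{\lambda+\varepsilon})$ estimate with constant $\left(\sqrt2\,C_{\lambda+\varepsilon}(Y)\right)^{m-1}\pi_{r,1}(v)$, and letting $\varepsilon\to0$ on finite sections gives the full $\ell_\lambda$ bound. This is also why the constant in \eqref{q1} involves $C_{\max\{\lambda,q\}}(Y)$ rather than $C_q(Y)$. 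As written, your argument does not establish the theorem in this regime.
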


\begin{proof}
If $\lambda<q$, from Lemma \ref{cot.q}, we have (\ref{q1}) for
\[
\left(  t_{1},...,t_{m}\right)  =\left(  \lambda,q,...,q\right)  .
\]
Since $\lambda<q$, the mixed $\left(  \ell_{\lambda},\ell_{q}\right)  -$ norm
inequality (see \cite[Proposition 3.1]{abps}), we also have (\ref{q1}) for the
exponents
\[
\left(  t_{1},...,t_{m}\right)  =\left(  q,...,q,\lambda,q,...,q\right)
\]
with $\lambda$ in the $k$-th position, for all $k=1,...,m$. Now, using a general version of  H\"older's inequality (see \cite[Theorem 2.1]{fou}), or
interpolating
\[
\left(  \lambda,q,...,q\right)  ,\left(  q,\lambda,q,...,q\right)
,\ldots,\left(  q,...,q,\lambda\right)
\]
in the sense of \cite{abps}, we get (\ref{q1}) for all $\left(  t_{1}%
,...,t_{m}\right)  \in[\lambda,q]^{m} $ such that
\[
\frac{1}{t_{1}}+\cdots+\frac{1}{t_{m}}=\frac{1}{\lambda}+\frac{m-1}{q}%
=\frac{1}{\lambda}+\frac{m-1}{\max\{\lambda,q\}}.
\]
If $\lambda\geq q$, for any $\varepsilon>0$, let $q_{\varepsilon}%
=\lambda+\varepsilon$. So $\lambda<q_{\varepsilon}$ and this automatically implies that 
$$\onep<\frac 1r-\frac1{q_{\varepsilon}}.$$
Since $Y$ has cotype $q_{\varepsilon}>q$, we may apply Lemma
\ref{cot.q} to get
\[
\left(  \sum_{i_{1}=1}^{N}\left(  \sum_{i_{2},...,i_{m}=1}^{N}\left\Vert
vA\left(  e_{i_{1}},\ldots,e_{i_{m}}\right)  \right\Vert ^{\lambda
+\varepsilon}\right)  ^{\frac{\lambda}{^{\lambda+\varepsilon}}}\right)
^{\frac{1}{\lambda}}\leq\left(  \sqrt{2}C_{\lambda+\varepsilon}(Y)\right)
^{m-1}\pi_{r,1}(v)\Vert A\Vert
\]
for all positive integer $N$. Making $\varepsilon\rightarrow0$, we get
\[
\left(  \sum_{i_{1},...,i_{m}=1}^{N}\left\Vert vA\left(  e_{i_{1}}%
,\ldots,e_{i_{m}}\right)  \right\Vert ^{\lambda}\right)  ^{\frac{1}{\lambda}%
}\leq\left(  \sqrt{2}C_{\lambda}(Y)\right)  ^{m-1}\pi_{r,1}(v)\Vert A\Vert,
\]
for all $N$ and the proof is done.
\end{proof}

\begin{remark}
If we take $t_{1}=\dots=t_{m}$, then, upon polarization, we recover exactly
\cite[Theorem 1.2]{dsp} with a much simpler proof due to the fact that the
inequality is simpler to prove for the extremal values of $(t_{1},\dots
,t_{m})$.
\end{remark}

\medskip

We are now ready for the proof of the sufficient part of Theorem \ref{main2}.
We split the proof into three cases, and we combine Theorem \ref{thmain} with
the Bennett-Carl inequalities (\cite{ben, carl}): for $1\leq s\leq q\leq
{+\infty}$, the inclusion map $\ell_{s}\hookrightarrow\ell_{q}$ is $\left(
r,1\right)  $-summing, where the optimal $r$ is given by
\[
\frac{1}{r}:=\frac{1}{2}+\frac{1}{s}-\frac{1}{\min\{2,q\}}.
\]

\noindent\emph{(i) $s\leq q\leq2$:}  The
Bennet-Carl-inequalities ensure that the inclusion map $\ell_{s}%
\hookrightarrow\ell_{q}$ is $\left(  r,1\right)  $-summing with $\frac{1}{r} =
\frac{1}{2} + \frac{1}{s} - \frac{1}{q}$, so the results follow from Theorem
\ref{thmain}, with $t_{1},\ldots,t_{m}$ satisfying
\[
\frac{1}{t_{1}}+\cdots+\frac{1}{t_{m}}=\frac{1}{2}+\frac{1}{s}-\frac{1}%
{q}-\left\vert \frac{1}{\mathbf{p}}\right\vert +\frac{m-1}{\max\{\lambda
,2\}}.
\]

\noindent\emph{(ii) $s\leq2\leq q$:} Also by using
Bennet-Carl inequalities, $\ell_{s}\hookrightarrow\ell_{2}$ is $\left(
s,1\right)  $-summing, thus we get (\ref{q3}) applying Theorem \ref{thmain},
with $t_{1},\ldots,t_{m}$ satisfying
\[
\frac{1}{t_{1}}+\cdots+\frac{1}{t_{m}}=\frac{1}{s}-\left\vert \frac
{1}{\mathbf{p}}\right\vert +\frac{m-1}{\max\{\lambda,2\}}.
\]

\medskip

\noindent\emph{(iii) $2\leq s\leq q$:}  Since
$\ell_{s}\hookrightarrow\ell_{s}$ is $\left(  s,1\right)  $-summing, the
result follows from Theorem \ref{thmain}, with $t_{1}=\dots=t_{m}=\lambda$ and
$\lambda\geq s$, since $r=s$ and
\[
\frac{1}{\lambda}:=\frac{1}{s}-\left\vert \frac{1}{\mathbf{p}}\right\vert
\leq\frac{1}{s}.
\]

\begin{remark}

Let us set
\[
c_{qs}:=\left\{
\begin{array}
[c]{ll}%
q\,, & \mbox{ if }s\leq q\leq2\,,\\
2\,, & \mbox{ if }s\leq2\leq q\,,\\
s\,, & \mbox{ if }2\leq s\leq q.
\end{array}
\right.
\]
\medskip

With the above notations, a careful look at the proof shows that the constant
$C$ which appears in Theorem \ref{main2} is dominated by
\[
\left(  \sqrt{2}C_{\max\left\{  \lambda,s,2\right\}  }(\ell_{c_{qs}})\right)
^{m-1}\pi_{r,1}(\ell_{s}\hookrightarrow\ell_{c_{qs}})
\]

\end{remark}

\section{Proof of Theorem \ref{main2} (optimality) \label{99}}
In this section we show that the exponents in Theorem \ref{main2} are optimal except when $q\leq 2$ and $\onep>\frac 12$. More precisely, if $(t_1,\dots,t_m)\in [1,+\infty)^m$ are such that there exists $C>1$ satisfying, for any continuous multilinear map $X_{p_1}\times\dots\times X_{p_m}\to X_{s}$,
\begin{equation}
\left(  \sum_{i_{1}=1}^{{+\infty}}\left(  \ldots\left(  \sum_{i_{m}=1}^{{+\infty}
}\left\Vert A\left(  e_{i_{1}},\ldots,e_{i_{m}}\right)  \right\Vert
_{\ell_{q}}^{t_{m}}\right)  ^{\frac{t_{m-1}}{t_{m}}}\ldots\right)  ^{\frac{t_{1}%
}{t_{2}}}\right)  ^{\frac{1}{t_{1}}}\leq C \Vert A\Vert, \label{q4}%
\end{equation}
then we prove that (\ref{dez11}) holds.
When $\lambda\geq 2$, we will always assume that $t_1=\dots=t_m=t$, since $\lambda=\max\left\{  \lambda
,s,2\right\}  $ and our inequality holds true when all the exponents are equal. We split the proof into several cases. Most of the cases are a consequence of a random construction. The main tool is the following lemma,
from \cite[Lemma 6.2]{abps}.
\begin{lemma}\label{LEMPROBA}
Let $d,n\geq 1$, $q_1,\dots,q_{d+1}\in [1,+\infty]^{d+1}$ and let, for $q\geq 1$, 
$$\alpha(q)=\left\{
\begin{array}{ll}
\frac12-\frac 1q&\textrm{if }q\geq 2\\
0&\textrm{otherwise.}
\end{array}\right.$$
Then there exists a $d$-linear mapping $A:\ell_{p_1}^n\times\dots\times\ell_{p_d}^n\to \ell_{p_{d+1}}^n$ which may be written
$$A\big(x^{(1)},\dots,x^{(d)}\big)=\sum_{i_1,\dots,i_{d+1}=1}^n \pm x_{i_1}^{(1)}\cdots x_{i_d}^{(d)}e_{i_{d+1}}$$
such that
$$\|A\|\leq C_d n^{\frac 12+\alpha(p_1)+\dots+\alpha(p_d)+\alpha(p_{d+1}^*)}.$$
\end{lemma}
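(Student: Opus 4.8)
The plan is to pass, via trace duality, from the vector-valued $d$-linear map $A$ to a scalar $(d+1)$-linear form, to reduce the $\ell_p$-estimate for it to a purely Hilbertian one, and then to produce the signs by a random (Kahane--Salem--Zygmund type) argument. Concretely, a $d$-linear map $A:\ell_{p_1}^n\times\cdots\times\ell_{p_d}^n\to\ell_{p_{d+1}}^n$ of the stated form corresponds to the $(d+1)$-linear form
\[
B\big(x^{(1)},\dots,x^{(d+1)}\big):=\big\langle A(x^{(1)},\dots,x^{(d)}),x^{(d+1)}\big\rangle=\sum_{i_1,\dots,i_{d+1}=1}^{n}\pm\,x^{(1)}_{i_1}\cdots x^{(d+1)}_{i_{d+1}},
\]
and $\|A\|$ is exactly the norm of $B$ on $\ell_{p_1}^n\times\cdots\times\ell_{p_d}^n\times\ell_{p_{d+1}^{*}}^n$. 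Writing $r_j=p_j$ for $j\le d$ and $r_{d+1}=p_{d+1}^{*}$ (note $\alpha(r_{d+1})=\alpha(p_{d+1}^{*})$), it therefore suffices to choose signs so that $B$, viewed as a form on $\prod_{j=1}^{d+1}\ell_{r_j}^n$, has norm at most $C_d\,n^{1/2+\alpha(r_1)+\cdots+\alpha(r_{d+1})}$.

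The second step removes the $\ell_{r_j}$'s in favour of $\ell_2$. For $x\in\mathbb{C}^n$ one has $\|x\|_2\le\|x\|_r$ when $r\le 2$ and $\|x\|_2\le n^{1/2-1/r}\|x\|_r$ when $r\ge 2$ (H\"older over the $n$ coordinates), so in every case $B_{\ell_r^n}\subseteq n^{\alpha(r)}B_{\ell_2^n}$. By multilinearity this yields, for any choice of signs,
\[
\|B\|_{\prod_{j}\ell_{r_j}^n}\ \le\ n^{\alpha(r_1)+\cdots+\alpha(r_{d+1})}\,\|B\|_{(\ell_2^n)^{d+1}},
\]
and the whole statement is reduced to exhibiting signs with $\|B\|_{(\ell_2^n)^{d+1}}\le C_d\sqrt n$.

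The third step --- the only genuinely delicate point --- is this Hilbertian bound, which I would get probabilistically. Take the $\pm$ to be independent Rademacher signs $\varepsilon_{i_1\cdots i_{d+1}}$. For fixed unit vectors $x^{(1)},\dots,x^{(d+1)}\in B_{\ell_2^n}$, the quantity $B(x^{(1)},\dots,x^{(d+1)})$ is a Rademacher sum whose coefficient family has $\ell_2$-norm $\prod_j\|x^{(j)}\|_2=1$, hence obeys a subgaussian tail bound $\mathbb{P}\big(|B(x^{(1)},\dots,x^{(d+1)})|>t\big)\le C\exp(-ct^2)$ with absolute $c,C>0$ (Hoeffding). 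Fix a $\tfrac1{2(d+1)}$-net $\mathcal N$ of $B_{\ell_2^n}$, of cardinality at most $\kappa_d^{\,n}$; a union bound over the at most $\kappa_d^{\,(d+1)n}$ points of $\mathcal N^{d+1}$, taken with $t=K_d\sqrt n$ for $K_d$ large enough in terms of $d$, shows that with positive probability $|B|\le K_d\sqrt n$ simultaneously at every point of $\mathcal N^{d+1}$. Fixing such a realization of the signs, a routine telescoping estimate (replacing each argument by its nearest net point costs a factor $(1-\tfrac1{2(d+1)})^{-1}$) upgrades this to $\|B\|_{(\ell_2^n)^{d+1}}\le 2K_d\sqrt n=:C_d\sqrt n$. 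Combined with the two reductions above, this gives the lemma with a constant depending only on $d$. (Alternatively, one may invoke Kahane's inequality together with a chaining/Chevet-type bound for $\mathbb{E}\,\|B\|_{(\ell_2^n)^{d+1}}$, or simply cite a general form of the Kahane--Salem--Zygmund inequality, for the same conclusion.)
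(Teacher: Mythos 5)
Your proof is correct, and it is essentially the standard argument behind this lemma: the paper itself gives no proof, citing it as Lemma 6.2 of \cite{abps}, where it is established by exactly this Kahane--Salem--Zygmund scheme (dualize to a $(d+1)$-linear form, absorb the $\ell_{p}$-geometry into the factor $n^{\alpha(p_1)+\cdots+\alpha(p_d)+\alpha(p_{d+1}^*)}$ via $B_{\ell_r^n}\subseteq n^{\alpha(r)}B_{\ell_2^n}$, and get the Hilbertian bound $C_d\sqrt n$ from random signs with a subgaussian tail plus a net and union bound). All three steps check out, including the bookkeeping that puts $\alpha(p_{d+1}^*)$ rather than $\alpha(p_{d+1})$ in the exponent, so there is nothing to add.
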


\subsection{Case 1: $1\leq s\leq q\leq 2$ and $\lambda<2$}
This case has already been solved in \cite[Section 6.2]{abps}, using Lemma \ref{LEMPROBA} with $d=m$ and $(q_1,\dots,q_{m+1})=(p_1,\dots,p_m,s)$.


\subsection{Case 2: $1\leq s\leq q\leq 2$, $\lambda\geq 2$ and $\onep\leq \frac 12$} This case has already been solved in \cite[Proposition 4.4(ib)]{dsp} using a Fourier matrix. We shall give an alternative probabilistic proof. Let $p\in [2,+\infty]$ be such that $\frac 1p=\onep$. By Lemma \ref{LEMPROBA}, there exists a linear map $T:\ell_p^n\to\ell_s^n$ which may be written $T(x)=\sum_{i,j}\veps_{i,j}x_ie_j$ with $\veps_{i,j}=\pm 1$ and such that
$$\|T\|\leq Cn^{\frac 12+\frac 12-\frac 1p+\frac 12-\frac 1{s^*}}=Cn^{\frac 12+\frac 1s-\onep}.$$
Let $A:\ell_{p_1}^n\times\cdots\times \ell_{p_m}^n\to\ell_{s}^n$ defined by 
$$A\big(x^{(1)},\dots,x^{(m)}\big):=\sum_{i,j}\veps_{i,j}x_i^{(1)}\cdots x_i^{(m)}e_j.$$
By H\"older's inequality, it is plain that $\|A\|\leq \|T\| \leq Cn^{\frac 12+\frac 1s-\onep}$. 
On the other hand, since $A(e_{i_{1}},\dots,e_{i_{m}})\neq 0$ if and only if $i_{1}=\ldots=i_{m}$,  and
$$\|A(e_i,\dots,e_i)\|_{\ell_{q}}=n^{1/q},$$
we have
$$\left(  \sum_{\bi\in\mkn{m}}\left\Vert A\left(  e_{i_{1}},\ldots,e_{i_{m}}\right)  \right\Vert
_{\ell_{q}}^{t}\right)  ^{\frac{1}{t}}=n^{\frac 1q+\frac1{t}}.$$
This clearly implies 
$$\frac 1t\leq \frac 12+\frac 1s-\frac 1q-\left|\frac1{\mathbf p}\right|.$$

\subsection{Case 3: $1\leq s\leq 2\leq q$ and $\lambda<2$} Let $p\in [0,+\infty]$ be defined by 
$$\frac 1p=\frac 1{p_m}+\frac 1{s^*}.$$
Since $\lambda<2$, it is easy to check that $p\geq 2$ and that $p_i\geq 2$ for any $i=1,\dots,m$. 
We then apply Lemma \ref{LEMPROBA} with $d=m-1$ and $(q_1,\dots,q_m)=(p_1,\dots,p_{m-1},p^*)$. We get an $(m-1)$-linear form $T:\ell_{p_1}^n\times\cdots\times \ell_{p_{m-1}}^n\to\ell_{p^*}^n$ which can be written
$$T\big(x^{(1)},\dots,x^{(m-1)}\big)=\sum_{i_1,\dots,i_m}\veps_{i_1,\dots,i_m}x_{i_1}^{(1)}\cdots x_{i_{m-1}}^{(m-1)}e_{i_m}$$
and such that 
$$\|T\|\leq C n^{\frac 12+\frac m2-\onep-\frac 1{s^*}}=Cn^{\frac{m-1}2-\onep+\frac 1s}.$$
We then define $A:\ell_{p_1}^n\times\cdots\times \ell_{p_m}^n\to\ell_{s}^n$ by
$$A\big(x^{(1)},\dots,x^{(m)}\big)=\sum_{i_1,\dots,i_m}\veps_{i_1,\dots,i_m}x^{(1)}_{i_1}\cdots x_{i_m}^{(m)} e_{i_m}.$$
Then, for any $x^{(1)},\dots,x^{(m)}\in B_{\ell_{p_1}^n}\times\dots\times B_{\ell_{p_m}^n}$, 
\begin{eqnarray*}
\|A\big(x^{(1)},\dots,x^{(m)}\big)\|&=&\sup_{y\in B_{\ell^n_{s^*}}}\left |\sum_{i_1,\dots,i_m}\veps_{i_1,\dots,i_m}x_{i_1}^{(1)}\cdots x_{i_m}^{(m)}y_{i_m}\right|\\
&\leq&\sup_{z\in B_{\ell^n_{p}}}\left |\sum_{i_1,\dots,i_m}\veps_{i_1,\dots,i_m}x_{i_1}^{(1)}\cdots x_{i_{m-1}}^{(m-1)}z_{i_m}\right|\\
&\leq&\|T\|.
\end{eqnarray*}

Moreover, given any $\bi\in\mkn{m}$, $\|A(e_{i_1},\dots,e_{i_m})\|_q=\|e_{i_m}\|_q=1$, so that
$$\left(  \sum_{i_{1}=1}^{{+\infty}}\left(  \ldots\left(  \sum_{i_{m}=1}^{{+\infty}
}\left\Vert A\left(  e_{i_{1}},\ldots,e_{i_{m}}\right)  \right\Vert
_{\ell_{q}}^{t_{m}}\right)  ^{\frac{t_{m-1}}{t_{m}}}\ldots\right)  ^{\frac{t_{1}%
}{t_{2}}}\right)  ^{\frac{1}{t_{1}}}=n^{\frac {1}{t_1}+\dots+\frac{1}{t_m}}.$$
Hence, provided (\ref{q4}) is satisfied, $(t_1,\dots,t_m)$ has to satisfy
$$\frac 1{t_1}+\dots+\frac 1{t_m}\leq \frac{m-1}2+\frac 1s-\left|\frac1{\mathbf p}\right|.$$

\subsection{Case 4 and Case 5: $1\leq s\leq2\leq q$ and $\lambda\geq2$, $2\leq s\leq q$} These cases have a
deterministic proof, as noted in \cite[Proposition 4.4 (iib), (iii)]{dsp},
considering $A:\ell_{p_{1}}^n\times\cdots\times \ell_{p_{m}}^n \rightarrow \ell^n_{s}$
given by
\[
A \big(x^{(1)},\dots,x^{(m)}\big):=\sum_{i=1}^{n}x_{i}^{(1)}\cdots x_{i}^{(m)}e_{i}.
\]

\subsection{ The proof of Theorem \ref{nov1}}

From Theorem 1.3, by choosing $t_{1}=\ldots=t_{m}$ we conclude that provided
\[
\left\vert \frac{1}{\mathbf{p}}\right\vert <\frac{1}{2}+\frac{1}{s}-\frac
{1}{\min\{q,2\}},
\]
the best exponent $\rho$ in Theorem \ref{nov1} satisfies
\[
\frac{m}{\rho}=\frac{1}{\lambda}+\frac{m-1}{\max\{\lambda,s,2{\}}}.
\]
To conclude the proof, it remains to prove that, whenever
\[
\left\vert \frac{1}{\mathbf{p}}\right\vert \geq\frac{1}{2}+\frac{1}{s}%
-\frac{1}{\min\{q,2\}},
\]
we cannot find an exponent $\rho>0$ such that (\ref{nov1}) is satisfied for all
$m$-linear operators $A:X_{p_{1}}\times\cdots\times X_{p_{m}}\rightarrow
X_{s}$. In fact, everything has already been done before: if $q\leq 2$, then we have just to follow the lines of Case 2 and if $q\geq 2$, then we may consider the $m$-linear mapping of Cases 4 and 5.

\section{The case $1\leq s\leq q\leq  2$, $\lambda\geq 2$ and $\onep>\frac12$}

\subsection{A reformulation of the Hardy-Littlewood type inequalities}

We shall improve in this section the bound given by Theorem \ref{nov1}. We shall proceed by interpolation.
To do this, we need a reformulation of the result of this theorem, as Villanueva and Perez-Garcia reformulated the Bohnenblust-Hille inequality in \cite{PV2}. The proof is a combination of  \cite[Corollary 3.20]{PV2} and Proposition 2.2 and will be omitted.

\begin{theorem}\label{THMREFORMULATION}
Let  $1\leq p_{1},...,p_{m}\leq+\infty$, $1\leq s\leq q\leq\infty$ and let
$\rho>0$. The following assertions are equivalent.
\begin{itemize}
\item[(A)] There exists $C>0$ such that, for every continuous $m$-linear mapping $A:X_{p_1}\times\cdots\times X_{p_m}\to X_s$, we have
$$\left(\sum_{i_1,\dots,i_m} \left\|A(e_{i_1},\dots,e_{i_m})\right\|_{\ell_q}^{\rho}\right)^{1/\rho}\leq C\|A\|.$$
\item[(B)] There exists $C>0$ such that, for any $n\geq 1$, for any Banach spaces $Y_1,\dots,Y_m$, for any
continuous $m$-linear mapping $S:Y_1\times\cdots\times Y_m\to X_s$, the induced operator
\begin{eqnarray*}
T:\ell_{p_1^*,w}^n(Y_1)\times\cdots\times \ell_{p_m^*,w}^n(Y_m)&\to&\ell_{\rho}^{n^m}(X_q)\\
\big(x^{(1)},\dots,x^{(m)}\big)&\mapsto&\big(S(x_{i_1}^{(1)},\dots,x_{i_m}^{(m)})\big)_{\bi\in\mkn{m}}
\end{eqnarray*}
satisfies $\|T\|\leq C\|S\|$.
\end{itemize}
\end{theorem}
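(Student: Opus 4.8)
The plan is to establish the two implications via an explicit dictionary between $m$-linear maps on $X_{p_1}\times\cdots\times X_{p_m}$ and the induced operators appearing in~$(B)$, in the spirit of P\'erez-Garc\'ia and Villanueva's reformulation of Bohnenblust--Hille in \cite{PV2}. For $(B)\Rightarrow(A)$, given a continuous $m$-linear $A:X_{p_1}\times\cdots\times X_{p_m}\to X_s$ and $n\geq1$, I would let $A_n$ be its restriction to $\ell_{p_1}^n\times\cdots\times\ell_{p_m}^n$ (so $\|A_n\|\leq\|A\|$) and apply $(B)$ with $Y_k=\ell_{p_k}^n$, $S=A_n$, and the canonical tuples $x^{(k)}=(e_1,\dots,e_n)$. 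The one fact needed is that $\|(e_i)_{i=1}^n\|_{\ell_{p_k^*,w}}=1$ in $\ell_{p_k}^n$, which is immediate from $(\ell_{p_k}^n)^*=\ell_{p_k^*}^n$. Then $T(x^{(1)},\dots,x^{(m)})=\big(A(e_{i_1},\dots,e_{i_m})\big)_{\bi\in\mkn{m}}$ and its $\ell_{\rho}^{n^m}(X_q)$-norm is exactly $\big(\sum_{\bi\in\mkn{m}}\|A(e_{i_1},\dots,e_{i_m})\|_{\ell_q}^{\rho}\big)^{1/\rho}$, so $(B)$ bounds this partial sum by $C\|A\|$, and letting $n\to\infty$ yields~$(A)$.

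For $(A)\Rightarrow(B)$, I would pull a general configuration back to the canonical one. Given $S:Y_1\times\cdots\times Y_m\to X_s$ and tuples $x^{(k)}=(x_1^{(k)},\dots,x_n^{(k)})\in\ell_{p_k^*,w}^n(Y_k)$, the linear maps $u_k:\ell_{p_k}^n\to Y_k$ defined by $u_k(e_i)=x_i^{(k)}$ satisfy $\|u_k\|=\|x^{(k)}\|_{\ell_{p_k^*,w}}$ (again by the duality $(\ell_{p_k}^n)^*=\ell_{p_k^*}^n$). The composition $A_n:=S\circ(u_1,\dots,u_m)$ is $m$-linear with $\|A_n\|\leq\|S\|\prod_k\|u_k\|$ and coefficients $A_n(e_{i_1},\dots,e_{i_m})=S(x_{i_1}^{(1)},\dots,x_{i_m}^{(m)})$; composing each variable with the norm-one truncation $X_{p_k}\to\ell_{p_k}^n$ extends $A_n$ to $\widetilde A:X_{p_1}\times\cdots\times X_{p_m}\to X_s$ of the same norm and the same coefficients (the rest being zero). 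Applying $(A)$ to $\widetilde A$ then gives $\big(\sum_{\bi\in\mkn{m}}\|S(x_{i_1}^{(1)},\dots,x_{i_m}^{(m)})\|_{\ell_q}^{\rho}\big)^{1/\rho}\leq C\|S\|\prod_k\|x^{(k)}\|_{\ell_{p_k^*,w}}$, which is exactly $\|T\|\leq C\|S\|$.

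The point to watch, rather than a serious obstacle, is that the constant $C$ of $(A)$ is independent of $n$: this is what makes the limit $n\to\infty$ in the first direction legitimate and, in the second, forces the bound $\|T\|\leq C\|S\|$ to hold uniformly over all $n$, all Banach spaces $Y_k$, and all $S$. A second bookkeeping issue is the interplay of $X_s$ and $X_q$ — the coefficients lie in $X_s$ but are measured in the coarser $X_q$-norm, which is harmless since $s\leq q$ — and one must check that the truncation and the identifications above preserve this norm. Finally, the pull-back step in $(A)\Rightarrow(B)$ is precisely what \cite[Corollary~3.20]{PV2} encapsulates in the language of multiple summing operators, so one may simply quote it in place of the explicit construction above.
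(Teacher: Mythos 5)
Your proof is correct and is essentially the argument the paper itself delegates to the literature: the paper omits the proof, citing \cite[Corollary 3.20]{PV2}, and your explicit dictionary (the canonical basis tuple has weak-$\ell_{p_k^*}$ norm one in $\ell_{p_k}^n$ for one direction; the identification $\|u_k\|=\|x^{(k)}\|_{w,p_k^*}$ of operators $\ell_{p_k}^n\to Y_k$ with weakly summable sequences, plus truncation, for the other) is precisely what that citation encapsulates. Nothing is missing.
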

We recall that, for any $p\in[1,+\infty]$ and any Banach space $Y$, 
$$\ell_{p,w}^n(Y)=\left\{(x_j)_{j=1}^n\subset Y;\ \|(x_j)\|_{w,p}:=\sup_{\varphi\in B_{Y^*}}\left(\sum_{j=1}^n |\varphi(x_j)|^p\right)^{1/p}<+\infty\right\}$$
with the appropriate modifications for $p=\infty$.

\subsection{Proof of the sufficient condition}
We now prove our better upper bound in the case $1\leq s\leq q\leq 2$, $\onep>\frac 12$ (namely we prove the first part of Theorem \ref{nov2}). Let $n\geq 1$, let $Y_1,\dots,Y_m$ be Banach spaces and let $S:Y_1\times\dots\times Y_m\to X_s$ be bounded. Let $n\geq 1$ and let $T$ be the operator induced by $S$ on 
$\mathcal Y=\ell_{p_1^*,w}^n(Y_1)\times\cdots\times \ell_{p_m^*,w}(Y_m)$, defined by
$$T\big(x^{(1)},\dots,x^{(m)}\big)=\big(S(x_{i_1}^{(1)},\dots,x_{i_m}^{(m)}\big)).$$
Then $T$ is bounded as an operator from $\mathcal Y$ into $\ell_{\infty}^{n^m}(X_s)$ (this is trivial). $T$ is also bounded as an operator from $\mathcal Y$ into $\ell_\rho^{n^m}(X_s)$ with $\frac 1\rho=\frac 1s-\onep$ 
(this is Theorem \ref{nov1} for $1\leq s\leq 2$ and $q\geq 2$). We can interpolate between these two extreme situations. Hence, let $q\in [s,2]$ and let $\theta\in [0,1]$ be such that 
$$\frac 1q=\frac{1-\theta}s+\frac \theta2\iff \theta=\frac{\frac 1s-\frac 1q}{\frac 1s-\frac 12}.$$
By \cite[Theorem 4.4.1]{berg.lofst}, $T$ is bounded as an operator from $\mathcal Y$ into $\ell_t^{n^m}(X_q)$
where
$$\frac 1t=\frac{1-\theta}{\infty}+\frac{\theta}{\rho}=\frac{\left(\frac 1s-\frac 1q\right)\left(\frac 1s-\onep\right)}{\frac 1s-\frac 12}.$$
\begin{remark}
It is easy to check that, for $1\leq s\leq q\leq 2$ and $\onep\geq\frac 12$, then the bound $\frac{\left(\frac 1s-\frac 1q\right)\left(\frac 1s-\onep\right)}{\frac 1s-\frac 12}$ is always better (namely larger) than the bound
$\frac 12+\frac 1s-\frac 1q-\onep$ obtained in Theorem \ref{nov1}.
\end{remark}

\subsection{The necessary condition}
We now prove the second part of Theorem \ref{nov2}. It also uses a probabilistic device  for linear maps when the two spaces do not need to have the same dimension. The forthcoming lemma can be found in \cite[Proposition 3.2]{ben}.
\begin{lemma}\label{LEMBENNETT}
Let $n,d\geq 1$, $1\leq p,s\leq 2$. There exists $T:\ell_p^d\to\ell_s^n$, $T(x)=\sum_{i,j}\pm x_je_i$ such that
$$\|T\|\leq C_{p,s}\max \left(d^{1/s},n^{1-\frac 1p}d^{\frac 1s-\frac 12}\right).$$
\end{lemma}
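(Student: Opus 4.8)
The natural route is probabilistic: I would take $T$ to be a random matrix with independent Rademacher entries, $T(x)=\sum_{i,j}\varepsilon_{ij}x_je_i$, bound $\mathbb{E}\,\|T\|_{\ell_p\to\ell_s}$ by the right-hand side, and then use Markov's inequality to pick a sign pattern for which $\|T\|$ is within a factor $2$ of its expectation. By the definition of the operator norm together with the duality $\|z\|_s=\sup_{\|y\|_{s^*}\le 1}\langle z,y\rangle$,
\[
\|T\|_{\ell_p\to\ell_s}=\sup_{\|x\|_p\le 1}\ \sup_{\|y\|_{s^*}\le 1}\ \sum_{i,j}\varepsilon_{ij}\,x_j\,y_i ,
\]
so everything reduces to bounding the expected supremum of the Rademacher bilinear form $(x,y)\mapsto\sum_{i,j}\varepsilon_{ij}x_jy_i$ over the product of the balls $B_{\ell_p}$ and $B_{\ell_{s^*}}$.

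The core of the argument is that last estimate, and I would handle it with Chevet's inequality --- first, if convenient, trading the Rademacher variables for standard Gaussians at the cost of an absolute constant via the contraction principle. Chevet's inequality bounds $\mathbb{E}\,\|G\|_{\ell_p\to\ell_s}$ by
\[
\mathrm{rad}_2\!\left(B_{\ell_p}\right)\, w\!\left(B_{\ell_{s^*}}\right)+\mathrm{rad}_2\!\left(B_{\ell_{s^*}}\right)\, w\!\left(B_{\ell_p}\right),
\]
where $\mathrm{rad}_2(K)=\sup_{u\in K}\|u\|_2$ and $w(K)=\mathbb{E}\sup_{u\in K}\langle g,u\rangle$ is the Gaussian mean width. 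For $1\le p,s\le 2$ all four quantities are classical and elementary: $\mathrm{rad}_2$ of an $\ell_p$-ball equals $1$, $\mathrm{rad}_2$ of an $\ell_{s^*}$-ball is $(\text{its dimension})^{1/s-1/2}$, and the widths $\mathbb{E}\|g\|_{p^*}$ and $\mathbb{E}\|g\|_{s}$ are, up to constants depending only on $p$ and $s$, the expected powers $(\text{dim})^{1-1/p}$ and $(\text{dim})^{1/s}$. Substituting these and keeping track of which of the two dimensions occupies which slot yields a bound of precisely the shape asserted in the lemma. If one prefers not to quote Chevet, the same two terms fall out of a Dudley entropy integral: the bilinear form is sub-Gaussian for the natural product-weighted Euclidean metric by Hoeffding's inequality, and the Euclidean metric entropy numbers of $\ell_p$-balls are classical, so the chaining reassembles exactly $\mathrm{rad}_2(B_{\ell_p})w(B_{\ell_{s^*}})+\mathrm{rad}_2(B_{\ell_{s^*}})w(B_{\ell_p})$.

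One genuinely separate point is the endpoint $p=1$ (and, dually, $s=1$): there $w(B_{\ell_1})=\mathbb{E}\|g\|_\infty$ is of order $\sqrt{\log(\text{dimension})}$ rather than a power, so the Chevet/chaining bound overshoots by a logarithmic factor. In that very case, however, the operator norm is insensitive to the signs --- for instance $\|T\|_{\ell_1\to\ell_s}=\max_j\|Te_j\|_s$ is a fixed power of one of the two dimensions for every sign pattern --- so one just computes it by hand and checks it fits under the stated bound.

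The step I expect to be the real obstacle is the sharp control of the bilinear Rademacher/Gaussian form over a product of two $\ell_p$-balls, i.e.\ establishing or correctly invoking Chevet's inequality, together with the bookkeeping that matches the diameters and widths of $B_{\ell_p}$ and $B_{\ell_{s^*}}$ to the two dimensions $n,d$ and to the two competing terms in the maximum. The remaining ingredients --- the explicit values of $\mathrm{rad}_2$ and of the Gaussian widths, the Gaussian--Rademacher comparison, and the passage from an expectation bound to a deterministic realization --- are routine.
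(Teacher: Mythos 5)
Your overall strategy (random signs, duality to a bilinear Rademacher supremum, Gaussian comparison, Chevet's inequality, a separate elementary argument at the $p=1$ endpoint, and Markov to extract one sign pattern) is the standard modern route to Bennett-type random matrix bounds, and it is genuinely different from the paper, which gives no proof at all and simply cites Bennett's Proposition~3.2. However, the one step you explicitly defer --- ``keeping track of which of the two dimensions occupies which slot'' --- is exactly where the argument fails to close, and you assert the outcome instead of checking it. For an $n\times d$ sign matrix acting from $\ell_p^d$ to $\ell_s^n$, the ball $B_{\ell_p}$ sits in $\mathbb{R}^d$ (Euclidean radius $1$, Gaussian width $\asymp_p d^{1-1/p}$) while $B_{\ell_{s^*}}$ sits in $\mathbb{R}^n$ (radius $n^{1/s-1/2}$, width $\asymp_s n^{1/s}$), so Chevet yields
\[
\mathbb{E}\,\|T\|_{\ell_p^d\to\ell_s^n}\;\leq\;C_{p,s}\left(n^{1/s}+d^{1-\frac1p}\,n^{\frac1s-\frac12}\right),
\]
which is the lemma with $n$ and $d$ interchanged, not the printed bound $\max\bigl(d^{1/s},\,n^{1-1/p}d^{1/s-1/2}\bigr)$.

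Moreover the printed bound is not merely out of reach of your method: it is unattainable by \emph{any} choice of signs, since every column of $T$ is a full $\pm1$ vector of length $n$, whence $\|T\|\geq\|Te_1\|_{\ell_s^n}=n^{1/s}$, and this exceeds $C_{p,s}\max\bigl(d^{1/s},n^{1-1/p}d^{1/s-1/2}\bigr)$ as soon as $n\gg d$ (take $p=s=1$: the right-hand side is $C_{1,1}\,d$ while $\|T\|=n$ for every sign pattern). So the statement as printed carries a transposition of $n$ and $d$; the application in Section~4.3, which records the coefficient sum as $n^{1/t}d^{1/q}$ and sets $d^{1/2}=n^{1-1/p}$, is consistent with the same global relabelling, so the downstream conclusion survives. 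What you can and should prove with Chevet is $\|T\|\leq C_{p,s}\max\bigl(n^{1/s},\,d^{1-1/p}n^{1/s-1/2}\bigr)$, and then either swap the roles of $n$ and $d$ throughout or flag the typo; as written, your claim that the substitution ``yields a bound of precisely the shape asserted'' is incorrect, and since you yourself identify this bookkeeping as the crux, the proof is not complete. The remaining ingredients --- the values of the radii and widths, the Gaussian--Rademacher comparison, the $\sqrt{\log d}$ overshoot at $p=1$ handled via the exact formula $\|T\|_{\ell_1^d\to\ell_s^n}=\max_j\|Te_j\|_s$, and the passage from an expectation bound to a single realization --- are indeed routine, as you say.
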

Coming back to the proof of Theorem \ref{nov2}, we first observe that we may always assume that $\onep<1$. Otherwise, we can consider
the $m$-linear map $A:X_{p_1}\times\cdots\times X_{p_m}\to X_s$ defined by
\[
A\big(x^{(1)},\dots,x^{(m)}\big)=\sum_{i\geq 1}x_i^{(1)}\dots x_i^{(m)}e_0
\]
and observe that it is bounded whereas it has infinitely many coefficients equal to 1. We then define $p\in [1,2]$ by $\frac 1p=\onep$ and we consider $T:\ell_p^d\to\ell_s^n$, $T(x)=\sum_{i,j}\veps_{i,j}x_je_i$ the map given by Lemma \ref{LEMBENNETT}.
We then define
\begin{eqnarray*}
A:\ell_{p_1}^d\times\cdots\times\ell_{p_m}^d&\to&\ell_s^n\\
\big(x^{(1)},\dots,x^{(m)}\big)&\mapsto&\sum_{i,j}\veps_{i,j}x_j^{(1)}\cdots x_j^{(m)}e_i
\end{eqnarray*}
and we observe that, by H\"older's inequality, $\|A\|\leq \|T\|$. Furthermore, 
$$\left(\sum_{i_1,\dots,i_m}\|A(e_{i_1},\dots,e_{i_m})\|_{\ell_q}^t\right)^{1/t}=n^{1/t}d^{1/q}.$$
Taking $d^{1/2}=n^{1-\frac 1p}$ (this is the optimal relation between $d$ and $n$), we get that if 
$$\left(\sum_{i_1,\dots,i_m}\|A(e_{i_1},\dots,e_{i_m})\|_{\ell_q}^t\right)^{1/t}\leq C\|A\|,$$
then it is necessary that 
$$\frac 1t\leq 2\left(1-\frac 1p\right)\left(\frac 1s-\frac 1q\right).$$
\begin{remark}
This last condition is optimal when $s=1$ or when $\onep=\frac 12$ (with, in fact, the same proof as in Case 2 above). When $1<s<2$, another necessary condition is 
$$\frac 1t\leq\frac 1s-\onep$$
(see Case 4 or Case 5 above).
\end{remark}

\section{Optimal estimates under cotype assumptions}\label{s8}

For a Banach space $X$, let $q_X:=\inf\{q\geq 2;\ X \textrm{ has cotype } q\}$. For scalar-valued multilinear operators it is easy to observe that summability
in multiple indexes behaves in a quite different way than summability in just one
index. For instance, for any bounded bilinear form $A:c_{0}\times
c_{0}\rightarrow\mathbb{C}$,
\[
\left(  \sum_{i,j=1}^{{+\infty}}|A(e_{i},e_{j})|^{\frac{4}{3}}\right)  ^{\frac
{3}{4}}\leq\sqrt{2}\Vert A\Vert
\]
and the exponent $4/3$ is optimal. But, if we sum diagonally $\left(
i=j\right)  $ the exponent $4/3$ can be reduced to $1$ since%
\[
\sum_{i=1}^{{+\infty}}|A(e_{i},e_{i})|\leq\Vert A\Vert
\]
for any bounded bilinear form $A:c_{0}\times c_{0}\rightarrow\mathbb{C}$. Now we prove Theorem \ref{000aaa} which shows  that when replacing the scalar field by infinite-dimensional
spaces the situation is quite different.

\begin{proof}

$(A)\Rightarrow(B).$ From a deep result of Maurey and Pisier (\cite{pisier} and \cite[Section 14]{djt}), $\ell_{q_{X}}$ is finitely
representable in $X$, which means that, for any $n\geq1$, one may find unit
vectors $z_{1},\dots,z_{n}\in X$ such that, for any $a_{1},\dots,a_{n}%
\in\mathbb{C}$,
\[
\sum_{i=1}^{n}\Vert a_{i}z_{i}\Vert_{X}\leq2\left(  \sum_{i=1}^{n}%
|a_{i}|^{q_{X}}\right)  ^{1/q_{X}}.
\]
We then consider the $m$-linear map $A:\ell_{p_{1}}^{n}\times\cdots
\times\ell_{p_{m}}^{n}\rightarrow X$ defined by
\[
A\left(  x^{(1)},\cdots,x^{(m)}\right)  :=\sum_{i=1}^{n}x_{i}^{(1)}\cdots
x_{i}^{(m)}z_{i}.
\]
Then, for any $(x^{(1)},\dots,x^{(m)})$ belonging to $B_{\ell_{p_{1}}^{n}%
}\times\dots\times B_{\ell_{p_{m}}^{n}}$,
\begin{align*}
\left\Vert A\left(  x^{(1)},\cdots,x^{(m)}\right)  \right\Vert  & \leq2\left(
\sum_{i=1}^{n}|x_{i}^{(1)}|^{q_{X}}\cdots|x_{i}^{(m)}|^{q_{X}}\right)
^{1/{q_{X}}}\\
& \leq2n^{\frac{1}{q_{X}}-\left\vert \frac{1}{\mathbf{p}}\right\vert }%
\end{align*}
where the last inequality follows from H\"{o}lder's inequality applied to the exponents $$\frac{p_{1}}{q_{X}},...,\frac{p_{m}}{q_{X}},\left(
1-q_{X}\left\vert \frac{1}{\mathbf{p}}\right\vert \right)  ^{-1}.$$

 On the other hand,
\[
\left(  \sum_{i=1}^{n}\Vert A\left(  e_{i},\dots,e_{i}\right)  \Vert^{\rho
}\right)  ^{1/\rho}=n^{\frac{1}{\rho}}%
\]
and we obtain $\left(  3\right)  $.

$(B)\Rightarrow(A).$ This implication is proved in \cite[Proposition 4.3]{dsp}. 
\end{proof}

If $X$ does not have cotype $q_X$, the condition remains necessary. But now we just have the following sufficient condition:
\[
\frac{m}{\rho}<\frac{1}{q_X}-\left|\frac{1}{\mathbf p}\right|.
\]

Of course, it would be nice to determine what happens in this case. A look at \cite[page 304]{djt} shows that the situation does not look simple.

\bigskip As a consequence of the previous result we conclude that under certain circumstances the
concepts of absolutely summing multilinear operator and multiple summing
multilinear operator (see \cite{ir, matos, PV}) are precisely the same.

\begin{corollary}
\bigskip Let $p\in[  2,{+\infty}]  $, let $X$ be an infinite
dimensional Banach space with cotype $q_{X}<\frac{p}{m}$ and let $\rho>0$. The following
assertions are equivalent:

(A) Every bounded $m$-linear operator $A:X_{p}\times\dots\times X_{p}%
\rightarrow X$ is absolutely $\left(  \rho;p^{\ast}\right)  $-summing.

(B)  Every bounded $m$-linear operator $A:X_{p}\times\dots\times
X_{p}\rightarrow X$ is  multiple $\left(  \rho;p^{\ast}\right)  $-summing.

(C) $\frac{1}{\rho}\leq\frac{1}{q_{X}}- \frac{m}{p}.$
\end{corollary}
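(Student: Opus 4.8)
The plan is to deduce the Corollary directly from Theorem \ref{000aaa} by unwinding the definitions of absolutely and multiple summing multilinear operators. Recall that an $m$-linear operator $A:X_{p}\times\dots\times X_{p}\to X$ is multiple $(\rho;p^{*})$-summing precisely when it maps weakly $p^{*}$-summable families in each variable to an absolutely $\rho$-summable family indexed by the $m$-tuples, and it is absolutely $(\rho;p^{*})$-summing when the same holds but only along the diagonal. Since $X_{p}=\ell_{p}$ with $p\in[2,+\infty]$, the canonical basis $(e_{i})$ is weakly $p^{*}$-summable (indeed $\sup_{\varphi\in B_{\ell_{p}^{*}}}\sum_{i}|\varphi(e_{i})|^{p^{*}}=\sum_i|\varphi(e_i)|^{p^*}\le 1$ since $\ell_p^*=\ell_{p^*}$), so both summing properties, when tested against the basis, reduce exactly to the finiteness statements appearing in Theorem \ref{000aaa}. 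The first move is therefore to observe $\mathbf{p}=(p,\dots,p)$ gives $|\tfrac{1}{\mathbf p}|=\tfrac{m}{p}$, so the hypothesis $q_{X}<\tfrac{p}{m}$ is exactly $|\tfrac1{\mathbf p}|<\tfrac1{q_X}$, the hypothesis needed to invoke Theorem \ref{000aaa}.

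Next I would prove the chain $(B)\Rightarrow(A)\Rightarrow(C)\Rightarrow(B)$. The implication $(B)\Rightarrow(A)$ is immediate since the diagonal sum $\sum_{i}\|A(e_{i},\dots,e_{i})\|^{\rho}$ is dominated by the full multiple-index sum $\sum_{i_{1},\dots,i_{m}}\|A(e_{i_{1}},\dots,e_{i_{m}})\|^{\rho}$, so every multiple summing operator is absolutely summing with the same $\rho$; more precisely, one invokes the standard fact that a multilinear map is multiple $(\rho;p^{*})$-summing iff the multiple-index sum is finite for all basis inputs (equivalently, by a closed graph / uniform boundedness argument, the corresponding inequality holds), and likewise absolutely $(\rho;p^{*})$-summing iff the diagonal sum is finite. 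For $(A)\Rightarrow(C)$: if every bounded $A:X_{p}\times\dots\times X_{p}\to X$ is absolutely $(\rho;p^{*})$-summing, then in particular $\sum_{i}\|A(e_{i},\dots,e_{i})\|^{\rho}<+\infty$ for every such $A$, which is precisely assertion $(A)$ of Theorem \ref{000aaa} (note that $\sum_{i_1,\dots,i_m}\|A(e_{i_1},\dots,e_{i_m})\|^\rho<+\infty$ and $\sum_i\|A(e_i,\dots,e_i)\|^\rho<+\infty$ agree as hypotheses there because the counterexample constructed in the proof of Theorem \ref{000aaa} is itself diagonal); hence $(B)$ of that theorem holds, i.e. $\tfrac1\rho\le\tfrac1{q_X}-\tfrac m p$, which is $(C)$. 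Finally, $(C)\Rightarrow(B)$ follows from the reverse direction of Theorem \ref{000aaa}: under $(C)$ we get $\sum_{i_{1},\dots,i_{m}}\|A(e_{i_{1}},\dots,e_{i_{m}})\|^{\rho}<+\infty$ for all bounded $A$, and by the standard characterization of multiple summing operators on sequence spaces (testing on the unit vector basis suffices, since an arbitrary weakly $p^*$-summable family is absorbed, up to a constant, by the basis via a diagonal operator of norm equal to its weak $\ell_{p^*}$-norm) this is exactly the statement that every such $A$ is multiple $(\rho;p^{*})$-summing.

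The main obstacle is the routine but slightly delicate bookkeeping in the equivalences "summing property $\Leftrightarrow$ the numerical series over the basis is finite." For the multiple summing case one must check that weakly $p^{*}$-summable families $(x^{(k)}_{j})_{j}$ in $\ell_{p}$ can be reduced to the canonical basis: writing $x^{(k)}_{j}=\sum_{l}a^{(k)}_{jl}e_{l}$ and factoring through the diagonal-type operator $u_{k}:\ell_{p}\to\ell_{p}$, one uses that the weak $\ell_{p^{*}}$-norm of $(x^{(k)}_{j})_{j}$ controls $\|u_{k}\|$, so composing $A$ with the $u_{k}$ shows the general inequality follows from the basis inequality. Similarly for absolutely summing along the diagonal. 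Another minor point to address is the endpoint $p=\infty$, where $X_{p}=c_{0}$, $p^{*}=1$, and $\tfrac mp=0$: there the argument is the same but one must note that the basis of $c_{0}$ is weakly $1$-summable only in the appropriate sense, which is harmless here because Theorem \ref{000aaa} already covers $\mathbf p\in[2,+\infty]^m$. With these points dispatched, the Corollary drops out, and I would keep the write-up short, citing \cite{matos, PV} for the characterizations of absolutely and multiple summing multilinear operators and simply pointing to Theorem \ref{000aaa} for the core equivalence.
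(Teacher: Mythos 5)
Your proposal is correct and follows the same route the paper intends: the corollary is deduced from Theorem \ref{000aaa} via the standard basis-testing characterizations of absolutely and multiple summing multilinear maps (composing with the operators $u_k:\ell_p\to\ell_p$ of norm equal to the weak $\ell_{p^*}$-norm), together with the key observation that the extremal operator in the proof of Theorem \ref{000aaa} is supported on the diagonal, so the necessity argument already runs under the weaker hypothesis (A). The paper states the corollary without proof as an immediate consequence of Theorem \ref{000aaa}, and your write-up supplies exactly the bookkeeping being left implicit there.
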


\bigskip We stress the equivalence between $(A)$ and $(B)$ is not true, in general. For instance, every bounded bilinear
operator $A:\ell_{2}\times\ell_{2}\rightarrow\ell_{2}$ is absolutely $\left(
1;1\right)  $-summing but this is no longer true for multiple summability.

\bigskip

\section{Constants of vector-valued Bohnenblust--Hille inequalities}

A particular case of our main result is the following vector-valued
Bohnenblust--Hille inequality (see \cite[Lemma 3]{df} and also \cite[Section 2.2]{ursula}):

\begin{theorem}
\label{61} Let $X$ be a Banach space, $Y$ a cotype $q$ space and
$v:X\rightarrow Y$ an $(r,1)$-summing operator with $1\leq r\leq q$. Then, for
all $m$-linear operators $T:c_{0}\times\cdots\times c_{0}\rightarrow X$,
\[
\left(  \sum_{i_{1},...,i_{m}=1}^{+\infty}\Vert vT\left(  e_{i_{1}}%
,\ldots,e_{i_{m}}\right)  \Vert_{Y}^{\frac{qrm}{q+(m-1)r}}\right)
^{\frac{q+\left(  m-1\right)  r}{qrm}}\leq C_{Y,m}\pi_{r,1}(v)\Vert T\Vert
\]
with $C_{Y,m}=\left(  \sqrt{2}C_{q}\left(  Y\right)  \right)  ^{m-1}.$
\end{theorem}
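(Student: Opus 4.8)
The plan is to deduce Theorem \ref{61} directly from Theorem \ref{thmain} by specializing the indices so that all exponents in the iterated sum collapse to the single exponent $\frac{qrm}{q+(m-1)r}$. First I would record the hypotheses of Theorem \ref{thmain}: take $X_{p_1}=\cdots=X_{p_m}=c_0$, i.e. $p_1=\cdots=p_m=\infty$, so that $\left|\frac 1{\mathbf p}\right|=0$ and hence $\frac 1\lambda=\frac 1r$, that is $\lambda=r$. The condition $\left|\frac1{\mathbf p}\right|<\frac1r$ is then trivially satisfied (since $r\leq q<\infty$, so $\frac 1r>0$), and $\lambda=r>0$.

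Next I would choose $t_1=\cdots=t_m=t$ with $\frac 1t=\frac{q+(m-1)r}{qrm}=\frac{1}{rm}+\frac{m-1}{qm}$, i.e. $\frac mt=\frac 1r+\frac{m-1}{q}$. Since $r\leq q$ we have $\max\{\lambda,q\}=\max\{r,q\}=q$, so the admissibility condition $\frac 1{t_1}+\cdots+\frac 1{t_m}\leq\frac 1\lambda+\frac{m-1}{\max\{\lambda,q\}}$ becomes exactly $\frac mt=\frac 1r+\frac{m-1}{q}$, which holds with equality by our choice of $t$. It also remains to check $t\in[\lambda,\max\{\lambda,q\}]=[r,q]$: from $\frac 1t=\frac1{rm}+\frac{m-1}{qm}$, which is a convex combination (weights $\frac 1m$ and $\frac{m-1}m$) of $\frac 1r$ and $\frac 1q$, we get $\frac 1q\leq\frac 1t\leq\frac 1r$, hence $r\leq t\leq q$ as required.

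Having verified all hypotheses, Theorem \ref{thmain} yields, for every continuous $m$-linear map $A:c_0\times\cdots\times c_0\to X$ and every $(r,1)$-summing $v:X\to Y$,
\[
\left(\sum_{i_1,\dots,i_m=1}^{+\infty}\|vA(e_{i_1},\dots,e_{i_m})\|_Y^{t}\right)^{1/t}\leq\left(\sqrt 2\,C_{\max\{\lambda,q\}}(Y)\right)^{m-1}\pi_{r,1}(v)\|A\|,
\]
where the iterated sums collapse to a single sum because all exponents $t_k$ are equal to $t$. Since $\max\{\lambda,q\}=q$, the constant is $\left(\sqrt 2\,C_q(Y)\right)^{m-1}\pi_{r,1}(v)$, and $\frac1t=\frac{q+(m-1)r}{qrm}$ gives exactly the exponent $\frac{qrm}{q+(m-1)r}$ in the statement. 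Renaming $A$ as $T$ finishes the argument with $C_{Y,m}=\left(\sqrt 2\,C_q(Y)\right)^{m-1}$.

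I do not expect any serious obstacle here: this is purely a matter of substituting $p_i=\infty$ and choosing the equal exponents correctly, the only points needing a line of care being (i) that $t$ indeed lands in $[r,q]$, which follows from convexity, and (ii) that the admissibility inequality is met — in fact with equality — for the chosen $t$. If one prefers to avoid invoking the full strength of Theorem \ref{thmain}, the same conclusion follows from Lemma \ref{cot.q} together with the mixed-norm and interpolation arguments used in its proof, but citing Theorem \ref{thmain} is the cleanest route.
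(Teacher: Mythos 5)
Your proposal is correct and matches the paper's own route: the paper presents Theorem~\ref{61} precisely as the particular case of Theorem~\ref{thmain} obtained by taking $p_1=\cdots=p_m=\infty$ (so $\lambda=r$) and equal exponents $t_1=\cdots=t_m=\frac{qrm}{q+(m-1)r}$. Your verification that this $t$ lies in $[r,q]$ and meets the admissibility condition with equality is exactly the specialization the paper leaves implicit.
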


In this section, in Theorem \ref{ttrr}, we improve the above estimate for $C_{Y,m}$. The proof of Theorem \ref{ttrr} follows almost word by word the proof of \cite[Proposition 3.1]{bps} using \cite[Lemma 2.2]{defant33} and Kahane's inequality instead of the Khinchine inequality. We present the proof for the sake of completeness. We need the following inequality due to Kahane:

\begin{kahane}
Let $0<p,q<+\infty$. Then there is a constant $K_{p,q}>0$ for which
\[
\left(  \int_{I}\Vert\sum_{k=1}^{n}r_{k}(t)x_{k}\,dt\Vert^{q}\right)
^{\frac{1}{q}}\leq K_{p,q}\left(  \int_{I}\Vert\sum_{k=1}^{n}r_{k}%
(t)x_{k}\,dt\Vert^{p}\right)  ^{\frac{1}{p}},
\]
regardless of the choice of a Banach space $X$ and of finitely many vectors
$x_{1},...,x_{n}\in X$.
\end{kahane}

\bigskip

\begin{theorem}\label{ttrr}
\label{62}For all $m$ and all $1\leq k<m,$%
\[
C_{Y,m}\leq\left(  C_{q}(Y)K_{\frac{qrk}{q+(k-1)r},2}\right)  ^{m-k}C_{Y,k}.
\]

\end{theorem}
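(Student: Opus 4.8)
The plan is to prove the inequality
\[
C_{Y,m}\leq\left(C_q(Y)K_{\frac{qrk}{q+(k-1)r},2}\right)^{m-k}C_{Y,k}
\]
by an inductive ``peeling'' argument: at each step we strip off one variable from the $m$-linear map, applying the cotype $q$ inequality in that single variable and then correcting the resulting $L^2(\Omega)$-average of Rademacher sums back to the $L^{\rho_k}$-average needed to invoke the $k$-linear estimate, the correction factor being supplied by Kahane's inequality. Concretely, set $\rho_j=\frac{qrj}{q+(j-1)r}$ for $1\le j\le m$, so that the exponent in Theorem~\ref{61} is $\rho_m$ and the target estimate compares $C_{Y,m}$ (the constant valid for $\rho_m$) with $C_{Y,k}$ (valid for $\rho_k$). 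It suffices to prove the one-step recursion
\[
C_{Y,j}\leq C_q(Y)\,K_{\rho_{j-1},2}\,C_{Y,j-1}\qquad(k<j\le m),
\]
and then multiply these inequalities for $j=k+1,\dots,m$.

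First I would fix an $m$-linear $T:c_0\times\cdots\times c_0\to X$ (for the one-step bound, a $j$-linear $T$) and the $(r,1)$-summing operator $v:X\to Y$. Freezing all indices except $i_1$, write $vT(e_{i_1},\dots,e_{i_j})$ and apply the cotype $q$ inequality of $Y$ in the variable $i_1$: the inner $\ell_q^{i_1}$-sum is dominated by $C_q(Y)$ times the $L^2_t$-norm of $\sum_{i_1}r_{i_1}(t)\,vT(e_{i_1},\dots,e_{i_j})$. Now the key move: instead of the $L^2_t$-average, Kahane's inequality lets me pass (at the cost of $K_{\rho_{j-1},2}$) to the $L^{\rho_{j-1}}_t$-average of the same Rademacher sum. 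For each fixed $t$, the map $(e_{i_2},\dots,e_{i_j})\mapsto vT\big(\sum_{i_1}r_{i_1}(t)e_{i_1},e_{i_2},\dots,e_{i_j}\big)$ is a $(j-1)$-linear map $c_0\times\cdots\times c_0\to X$ of norm at most $\|T\|$ (since $\|\sum_{i_1}r_{i_1}(t)e_{i_1}\|_{c_0}=1$), so applying the $(j-1)$-linear estimate with constant $C_{Y,j-1}$ and exponent $\rho_{j-1}$, then integrating in $t$ and using Minkowski's integral inequality (valid because $\rho_{j-1}\le\rho_j$, hence one reorders the $L^{\rho_{j-1}}_t$ and $\ell_{\rho_j}$ norms in the correct direction), yields exactly $C_{Y,j}\le C_q(Y)K_{\rho_{j-1},2}C_{Y,j-1}\pi_{r,1}(v)\|T\|$ after collecting the remaining sums. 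This is precisely the scheme of \cite[Proposition 3.1]{bps} with \cite[Lemma 2.2]{defant33} handling the norm-one substitution $\sum r_{i_1}(t)e_{i_1}$, and with Kahane replacing Khinchine so that it works for a general Banach-space-valued codomain rather than a scalar one.

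The main obstacle is the bookkeeping of the two competing exponents and the direction of the Minkowski/Fubini exchange: one must check that $\rho_{j-1}\le \rho_j$ (immediate from $j\mapsto \rho_j$ increasing, since $1/\rho_j=\frac{1}{r}+(j-1)\big(\frac1{qr}-\frac1{r}\big)$ wait — more simply $1/\rho_j = \frac{q+(j-1)r}{qrj}$ is decreasing in $j$) so that the inner $L^{\rho_{j-1}}_t$-norm may be pulled outside the outer $\ell^{i_2,\dots,i_j}_{\rho_j}$-norm, and that the mixed-norm structure produced by the $(j-1)$-linear inequality matches, after integration, the single outer exponent $\rho_j$ demanded on the left. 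Apart from this, everything is a routine transcription, so I would present the one-step recursion in detail and then simply remark that iterating it from $j=m$ down to $j=k+1$ and multiplying the factors gives the stated bound; taking $k=1$ and $C_{Y,1}=1$ recovers the constant $\big(\sqrt2\,C_q(Y)\big)^{m-1}$ of Theorem~\ref{61} (with Kahane constants absorbed), which is the consistency check worth noting.
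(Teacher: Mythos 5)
Your plan has the right ingredients (cotype in the peeled variables, Kahane to move from $L^2$ to $L^{s}$, the lower-order case applied for frozen Rademacher signs), but there is a genuine gap at the final step, where you claim that ``integrating in $t$ and using Minkowski's integral inequality \dots yields exactly $C_{Y,j}\le C_q(Y)K_{\rho_{j-1},2}C_{Y,j-1}$ after collecting the remaining sums.'' Peeling the first variable produces only the single mixed-norm estimate
\[
\left(\sum_{i_2,\dots,i_j}\Bigl(\sum_{i_1}\|vTe_{\mathbf i}\|_Y^{q}\Bigr)^{\rho_{j-1}/q}\right)^{1/\rho_{j-1}}\leq C_q(Y)K_{\rho_{j-1},2}\,C_{Y,j-1}\,\pi_{r,1}(v)\|T\|,
\]
and no amount of Minkowski/Fubini reordering turns one such estimate into the pure $\ell_{\rho_j}$ norm over all $j$ indices: already for $j=2$, $r=1$, $q=2$ the matrix $a_{i_1 i_2}=\mathbf 1_{\{i_1\le n,\ i_2=1\}}$ has mixed norm $n^{1/2}$ but $\ell_{4/3}$ norm $n^{3/4}$. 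What is indispensable, and missing from your write-up, is the symmetrization over which coordinate carries the inner $\ell_q$ norm together with a Blei-type interpolation of the resulting $j$ mixed norms (whose exponents do match, since $\tfrac1{\rho_j}=\tfrac1j\bigl(\tfrac1q+\tfrac{j-1}{\rho_{j-1}}\bigr)$); this is exactly the inequality the paper invokes as \cite[Remark 2.2]{bps} in its display (\ref{55f}).

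For comparison, the paper does not peel one variable at a time: it applies the Blei inequality once, with blocks of sizes $k$ and $m-k$, treats all $m-k$ complementary variables simultaneously via the multilinear cotype inequality of \cite[Lemma 2.2]{defant33} (which is where the factor $(C_q(Y)K_{s,2})^{m-k}$ with $s=\frac{qrk}{q+(k-1)r}$ comes from), and then invokes the $k$-linear case for each fixed choice of signs. If you repair your one-step recursion by inserting the symmetrization and Blei step, the iteration would in fact yield the slightly better constant $C_q(Y)^{m-k}\prod_{j=k}^{m-1}K_{\rho_j,2}\cdot C_{Y,k}\le\bigl(C_q(Y)K_{\rho_k,2}\bigr)^{m-k}C_{Y,k}$, since $p\mapsto K_{p,2}$ is nonincreasing; but as written the argument does not close.
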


\begin{proof}
Let $\rho:=\frac{qrm}{q+(m-1)r}$ and to simplify notation let us write%
\[
vTe_{\mathbf{i}}=vT\left(  e_{i_{1}},\ldots,e_{i_{m}}\right)  .
\]
Let us make use of \cite[Remark 2.2]{bps} with $m\geq2,\ 1\leq k\leq m-1$ and
$s=\frac{qrk}{q+(k-1)r}$. So we have
\begin{equation}
\left(  \sum_{\mathbf{i}}\Vert vTe_{\mathbf{i}}\Vert_{Y}^{\rho}\right)
^{\frac{1}{\rho}}\leq\prod_{S\in P_{k}(m)}\left(  \sum_{\mathbf{i}_{S}}\left(
\sum_{\mathbf{i}_{\hat{S}}}\Vert vT\left(  e_{\mathbf{i}_{S}},e_{\mathbf{i}%
_{\hat{S}}}\right)  \Vert_{Y}^{q}\right)  ^{\frac{s}{q}}\right)  ^{\frac
{1}{s\binom{m}{k}}},\label{55f}%
\end{equation}
where $P_{k}(m)$ denotes the set of all subsets of $\{1,...,m\}$ with
cardinality $k.$ For sake of clarity, we shall assume that $S=\{1,\dots,k\}$.
By the multilinear cotype inequality (see \cite[Lemma 2.2]{defant33}) and the
Kahane inequality, we have {\small
\begin{align*}
&  \left(  \sum_{\mathbf{i}_{\hat{S}}}\Vert vT\left(  e_{\mathbf{i}_{S}%
},e_{\mathbf{i}_{\hat{S}}}\right)  \Vert_{Y}^{q}\right)  ^{\frac{s}{q}}\\
&  \leq\left(  C_{q}(Y)K_{s,2}\right)  ^{s(m-k)}\int_{I^{m-k}}\left\Vert
\sum_{\mathbf{i}_{\hat{S}}}r_{\mathbf{i}_{\hat{S}}}(t_{\hat{S}})vT\left(
e_{\mathbf{i}_{S}},e_{\mathbf{i}_{\hat{S}}}\right)  \right\Vert _{Y}%
^{s}\,dt_{\hat{S}}\\
&  =\left(  C_{q}(Y)K_{s,2}\right)  ^{s(m-k)}\int_{I^{m-k}}\left\Vert
vT\left(  e_{\mathbf{i}_{S}},\sum_{\mathbf{i}_{\hat{S}}}r_{\mathbf{i}_{\hat
{S}}}(t_{\hat{S}})e_{\mathbf{i}_{\hat{S}}}\right)  \right\Vert _{Y}%
^{s}\,dt_{\hat{S}}\\
&  =\left(  C_{q}(Y)K_{s,2}\right)  ^{s(m-k)}\int_{I^{m-k}}\left\Vert v\left(
T\left(  e_{i_{1}},\dots,e_{i_{k}},\sum_{i_{k+1}}r_{k+1}(t_{k+1})e_{k+1}%
,\dots,\sum_{i_{m}}r_{m}(t_{m})e_{m}\right)  \right)  \right\Vert _{Y}%
^{s}\,dt_{k+1}\dots dt_{m}\\
&
\end{align*}
} But for a fixed choice of $\left(  t_{k+1},\dots,t_{m}\right)  \in
I^{m-k}=[0,1]^{m-k}$, we know, by Theorem \ref{61}, that
\[
\sum_{i_{1},\dots,i_{k}}\left\Vert v\left(  T\left(  e_{i_{1}},\dots,e_{i_{k}%
},\sum_{i_{k+1}}r_{k+1}(t_{k+1})e_{k+1},\dots,\sum_{i_{m}}r_{m}(t_{m}%
)e_{m}\right)  \right)  \right\Vert _{Y}^{s}\newline\leq\left(  C_{Y,k}%
\pi_{r,1}(v)\Vert T\Vert\right)  ^{s}.
\]
Thus,
\begin{align}
&  \sum_{\mathbf{i}_{S}}\left(  \sum_{\mathbf{i}_{\hat{S}}}\Vert vT\left(
e_{\mathbf{i}_{S}},e_{\mathbf{i}_{\hat{S}}}\right)  \Vert_{Y}^{q}\right)
^{\frac{s}{q}}\nonumber\\
&  \leq\left(  C_{q}(Y)K_{s,2}\right)  ^{s(m-k)}\cdot\sum_{i_{1},\dots,i_{k}%
}\left\Vert v\left(  T\left(  e_{i_{1}},\dots,e_{i_{k}},\sum_{i_{k+1}}%
r_{k+1}(t_{k+1})e_{k+1},\dots,\sum_{i_{m}}r_{m}(t_{m})e_{m}\right)  \right)
\right\Vert _{Y}^{s}\nonumber\\
&  \leq\left(  \left(  C_{q}(Y)K_{s,2}\right)  ^{m-k}\pi_{r,1}(v)C_{Y,k}\Vert
T\Vert\right)  ^{s},\label{estrela}%
\end{align}
namely
\[
\left(  \sum_{\mathbf{i}_{S}}\left(  \sum_{\mathbf{i}_{\hat{S}}}\Vert
vT\left(  e_{\mathbf{i}_{S}},e_{\mathbf{i}_{\hat{S}}}\right)  \Vert_{Y}%
^{q}\right)  ^{\frac{s}{q}}\right)  ^{\frac{1}{s}}\leq\left(  C_{q}%
(Y)K_{s,2}\right)  ^{m-k}\pi_{r,1}(v)C_{Y,k}\Vert T\Vert.
\]
From (\ref{55f}) we conclude that
\[
\left(  \sum_{\mathbf{i}}\Vert vTe_{\mathbf{i}}\Vert_{Y}^{\rho}\right)
^{\frac{1}{\rho}}\leq\left(  C_{q}(Y)K_{s,2}\right)  ^{m-k}C_{Y,k}\pi
_{r,1}(v)\Vert T\Vert.
\]

\end{proof}

\bigskip When $m$ is even, the case $k=\frac{m}{2}$ recovers the constants
from \cite{ursula}.

\begin{corollary}
For all $m,$%
\[
C_{Y,m}\leq C_{q}(Y)^{m-1}%
{\textstyle\prod\limits_{k=1}^{m-1}}
K_{\frac{qrk}{q+(k-1)r},2}.
\]

\end{corollary}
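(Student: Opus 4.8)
The plan is to obtain the corollary by iterating Theorem \ref{ttrr} down to the base level $k=1$. First I would pin down the base constant $C_{Y,1}$. For $m=1$ the exponent in Theorem \ref{61} is $\rho=\frac{qr}{q+0\cdot r}=r$, so that statement reduces, for a linear map $T:c_0\to X$, to
\[
\left(\sum_i\|vT(e_i)\|_Y^r\right)^{1/r}\leq C_{Y,1}\pi_{r,1}(v)\|T\|.
\]
Since $v$ is $(r,1)$-summing and $\sup_{x^*\in B_{X^*}}\sum_i|x^*(Te_i)|\leq\|T\|$, this holds with $C_{Y,1}=1$, in agreement with the formula $C_{Y,m}=(\sqrt2\,C_q(Y))^{m-1}$ at $m=1$.

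Next I would apply Theorem \ref{ttrr} with $m$ replaced by $k+1$ and the intermediate index equal to $k$, for each $k=1,\dots,m-1$. Because $(k+1)-k=1$, this yields the one-step estimate
\[
C_{Y,k+1}\leq C_q(Y)\,K_{\frac{qrk}{q+(k-1)r},2}\,C_{Y,k},
\]
where the Kahane constant carries exactly the index $s=\frac{qrk}{q+(k-1)r}$ appearing in Theorem \ref{ttrr}. Multiplying these inequalities for $k=1,\dots,m-1$, a telescoping product in the $C_{Y,\bullet}$ factors, gives
\[
C_{Y,m}\leq C_{Y,1}\prod_{k=1}^{m-1}\Bigl(C_q(Y)\,K_{\frac{qrk}{q+(k-1)r},2}\Bigr)=C_{Y,1}\,C_q(Y)^{m-1}\prod_{k=1}^{m-1}K_{\frac{qrk}{q+(k-1)r},2},
\]
and substituting $C_{Y,1}=1$ delivers the claimed bound. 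Equivalently, one may run an induction on $m$, using Theorem \ref{ttrr} with $k=m-1$ as the inductive step and $C_{Y,1}=1$ as the base case; the two presentations are the same.

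I do not expect a genuine obstacle here: once Theorem \ref{ttrr} is established, the corollary is a purely formal telescoping of a chain of inequalities. The only points deserving a moment's care are that each index $s=\frac{qrk}{q+(k-1)r}$ lies in $(0,+\infty)$ so that Kahane's inequality is applicable, which is immediate from $q,r\geq1$ and $k\geq1$, and that the base constant $C_{Y,1}$ really equals $1$ and not something larger, which is precisely the content of the $m=1$ discussion above.
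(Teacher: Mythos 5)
Your proposal is correct and is precisely the intended derivation: the paper states this corollary without proof, and the only natural route is to iterate Theorem~\ref{ttrr} with $m$ replaced by $k+1$ and intermediate index $k$, telescoping down to $C_{Y,1}=1$, exactly as you do. Your explicit check that the $m=1$ case of Theorem~\ref{61} holds with constant $1$ (via the $(r,1)$-summing property of $v$) is a worthwhile detail the paper leaves implicit.
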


\section{Other exponents}

From now on $1\leq r\leq q$ and $\left(  q_{1},\ldots,q_{m}\right)  \in\lbrack
r,q]^{m}$ so that
\[
\frac{1}{q_{1}}+\cdots+\frac{1}{q_{m}}=\frac{q+(m-1)r}{qr}=\frac{1}{r}%
+\frac{m-1}{q}%
\]
are called vector-valued Bohnenblust--Hille exponents. From Theorem
\ref{thmain} we have:

\begin{theorem}
[Multiple exponent vector-valued Bohnenblust--Hille inequality]Let $X$ be a
Banach space and $Y$ a cotype $q$ space with $1\leq r\leq q$. If $\left(
q_{1},\ldots,q_{m}\right)  \in\lbrack r,q]^{m}$ are vector-valued
Bohnenblust--Hille exponents, then there exists $C_{Y,q_{1},\dots,q_{m}}\geq1$
such that, for all $m$-linear operators $T:c_{0}\times\cdots\times
c_{0}\rightarrow X$ and every $(r,1)$-summing operator $v:X\rightarrow Y$, we
have
\begin{equation}
\left(  \sum_{i_{1}=1}^{+\infty}\dots\left(  \sum_{i_{m}=1}^{+\infty}\Vert
vTe_{\mathbf{i}}\Vert_{Y}^{q_{m}}\right)  ^{\frac{q_{m-1}}{q_{m}}}%
\dots\right)  ^{\frac{1}{q_{1}}}\leq C_{Y,q_{1},\ldots,q_{m}}\pi_{r,1}(v)\Vert
T\Vert,\label{eq}%
\end{equation}
with $C_{Y,q_{1},\ldots,q_{m}}=\left(  \sqrt{2}C_{q}\left(  Y\right)  \right)
^{m-1}.$
\end{theorem}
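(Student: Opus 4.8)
The plan is to obtain this as an immediate specialization of Theorem~\ref{thmain}. First I would set $p_1=\cdots=p_m=\infty$, so that $X_{p_1}\times\cdots\times X_{p_m}=c_0\times\cdots\times c_0$ and the $m$-linear map $A$ appearing in Theorem~\ref{thmain} can be taken to be the given operator $T$. With this choice $\onep=0$, so the standing hypothesis $\onep<\frac1r$ of that theorem holds and
\[
\frac1\lambda=\frac1r-\onep=\frac1r,\qquad\text{i.e.}\quad \lambda=r.
\]
Since $r\le q$ we have $\max\{\lambda,q\}=q$, so the cotype constant occurring on the right-hand side of \eqref{q1} is $C_{\max\{\lambda,q\}}(Y)=C_q(Y)$, which is exactly the constant claimed in the statement (together with the factors $(\sqrt2)^{m-1}$ and $\pi_{r,1}(v)$ that are already present in \eqref{q1}).

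It then remains only to check that the vector-valued Bohnenblust--Hille exponents are admissible in Theorem~\ref{thmain} with $\lambda=r$. By their very definition, $(q_1,\dots,q_m)\in[r,q]^m=[\lambda,\max\{\lambda,q\}]^m$ and
\[
\frac1{q_1}+\cdots+\frac1{q_m}=\frac1r+\frac{m-1}q=\frac1\lambda+\frac{m-1}{\max\{\lambda,q\}},
\]
so the constraint $\frac1{q_1}+\cdots+\frac1{q_m}\le\frac1\lambda+\frac{m-1}{\max\{\lambda,q\}}$ required by Theorem~\ref{thmain} is satisfied (indeed with equality). Applying that theorem with $t_i=q_i$, $i=1,\dots,m$, yields precisely \eqref{eq} with $C_{Y,q_1,\dots,q_m}=\bigl(\sqrt2\,C_q(Y)\bigr)^{m-1}$.

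There is no real obstacle here: the entire content sits in Theorem~\ref{thmain}, and the only thing to verify — that the Bohnenblust--Hille exponents meet the admissibility constraint \eqref{dez11} — is immediate from their definition. The sole point that might deserve a parenthetical comment is the degenerate case $q=\infty$, where the cotype hypothesis is vacuous and every iterated $\ell_{q_i}$-norm with $q_i=\infty$ is read with the usual supremum convention; there $r\le q=\infty$ still gives $\lambda=r$, and as long as $r<\infty$ the argument above is unchanged.
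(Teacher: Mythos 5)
Your proposal is correct and is exactly the paper's route: the paper derives this theorem directly from Theorem~\ref{thmain} by taking $p_1=\cdots=p_m=\infty$ (so $\lambda=r$, $\max\{\lambda,q\}=q$) and observing that the Bohnenblust--Hille exponents satisfy \eqref{dez11} with equality. Your verification of the admissibility constraint and of the constant $\left(\sqrt{2}C_q(Y)\right)^{m-1}\pi_{r,1}(v)$ matches what the paper leaves implicit.
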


Our final result gives better estimates for the constants $C_{Y,q_{1}%
,\ldots,q_{m}}:$

\begin{theorem}
If $\left(  q_{1},\dots,q_{m}\right)  $ is a vector-valued Bohnenblust--Hille
exponent, then%
\[
C_{Y,q_{1}\dots,q_{m}}\leq\prod_{k=1}^{m}\left(  \left(  C_{q}(Y)K_{\frac
{kqr}{q+(k-1)r},2}\right)  ^{m-k}C_{Y,k}\right)  ^{\theta_{k}}%
\]
with
\begin{equation}
\theta_{m}=m\left(  \frac{1}{r}-\frac{1}{q}\right)  ^{-1}\left(  \frac
{1}{q_{m}}-\frac{1}{q}\right)  \label{222}%
\end{equation}
and
\begin{equation}
\theta_{k}=k\left(  \frac{1}{r}-\frac{1}{q}\right)  ^{-1}\left(  \frac
{1}{q_{k}}-\frac{1}{q_{k+1}}\right)  ,\ \text{ for }k=1,\dots,m-1. \label{333}%
\end{equation}
\bigskip
\end{theorem}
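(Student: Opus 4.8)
The plan is to prove this by interpolating the extremal-exponent inequalities supplied by Theorem~\ref{62} (equivalently Theorem~\ref{ttrr}), exactly in the spirit of the passage from Theorem~\ref{thmain} to the mixed-norm estimates used earlier in the paper. First I would record the $m$ ``vertices'' among the vector-valued Bohnenblust--Hille exponents: for each $k\in\{1,\dots,m\}$, the tuple $v_k$ having the entry $\frac{qrk}{q+(k-1)r}$ in $m-k+1$ coordinates and the entry $q$ in the remaining $k-1$ coordinates is itself a vector-valued Bohnenblust--Hille exponent (one checks $\sum 1/q_i=\frac1r+\frac{m-1}q$ by a direct count), and for that tuple the estimate $C_{Y,v_k}\le\big(C_q(Y)K_{\frac{kqr}{q+(k-1)r},2}\big)^{m-k}C_{Y,k}$ is precisely what the inductive/block argument behind Theorem~\ref{ttrr} gives (it splits $m$ into a block of size $k$ handled by $C_{Y,k}$ and $m-k$ ``Kahane'' steps).

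Next I would write the target tuple $(q_1,\dots,q_m)$ as a convex combination, in the sense of interpolation of mixed $\ell$-norms used in \cite{abps}, of the vertices $v_1,\dots,v_m$ with weights $\theta_1,\dots,\theta_m$ as in \eqref{222}--\eqref{333}. The point is the telescoping identity: since the tuples are ordered so that each successive vertex increases one coordinate from $\frac{qrk}{q+(k-1)r}$ to $q$, the coefficients $\theta_k=k\big(\frac1r-\frac1q\big)^{-1}\big(\frac1{q_k}-\frac1{q_{k+1}}\big)$ (and $\theta_m$ analogously with $\frac1{q_m}-\frac1q$) are nonnegative because $(q_1,\dots,q_m)$ may, without loss of generality, be taken increasing, and they sum to $1$: summing \eqref{333} over $k=1,\dots,m-1$ and adding \eqref{222} telescopes to $\big(\frac1r-\frac1q\big)^{-1}\big(\frac1{q_1}+\dots+\frac1{q_m}-\frac mq\big)\cdot$(appropriate bookkeeping) $=1$ using $\sum 1/q_i=\frac1r+\frac{m-1}q$. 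One also checks that $\frac1{q_i}=\sum_k \theta_k\cdot\frac1{(v_k)_i}$ coordinatewise, which is what the interpolation theorem for iterated $\ell_p$-norms requires.

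Then the conclusion is immediate: applying the interpolation lemma for mixed-norm inequalities (the same device invoked after Theorem~\ref{thmain}, i.e.\ \cite[Theorem~2.1]{fou} or the interpolation of \cite{abps}) to the $m$ valid inequalities \eqref{eq} at the vertices $v_1,\dots,v_m$ with constants $C_{Y,v_k}$, we obtain \eqref{eq} at $(q_1,\dots,q_m)$ with constant $\prod_{k=1}^m C_{Y,v_k}^{\theta_k}\le\prod_{k=1}^m\big((C_q(Y)K_{\frac{kqr}{q+(k-1)r},2})^{m-k}C_{Y,k}\big)^{\theta_k}$, which is exactly the claimed bound.

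I expect the main obstacle to be purely bookkeeping: verifying carefully that the proposed $\theta_k$ are nonnegative and sum to $1$, and that the coordinatewise convexity $\frac1{q_i}=\sum_k\theta_k/(v_k)_i$ holds for \emph{every} $i$ (not just on average), since the interpolation machinery for iterated $\ell_p$-norms is only valid when the exponent tuple is a genuine coordinatewise convex combination of the tuples being interpolated. Once the vertices are chosen in the right order this is a telescoping computation, but it must be done with care; in particular one should first reduce to the case where $q_1\le q_2\le\dots\le q_m$ (permuting coordinates only permutes the iterated sums up to the constant, by the usual symmetrization, or more simply because the statement is about the unordered family of exponents), so that all the differences $\frac1{q_k}-\frac1{q_{k+1}}$ are nonnegative.
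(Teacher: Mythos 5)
Your overall strategy --- interpolating, in the sense of the mixed-norm interpolation of \cite{abps} (or \cite[Theorem 2.1]{fou}), between the extremal tuples produced by the block/Kahane argument of Theorem~\ref{ttrr}, with the weights $\theta_k$ of \eqref{222}--\eqref{333} --- is exactly the paper's route. However, your description of the vertices is wrong, and the error is not cosmetic. The estimate \eqref{estrela} gives the mixed-norm inequality for the tuple $\alpha_k=(s_k,\dots,s_k,q,\dots,q)$ with $s_k=\frac{kqr}{q+(k-1)r}$ occurring in the \emph{first $k$} coordinates and $q$ in the remaining $m-k$ (the block of size $k$ is handled by $C_{Y,k}$ and the other $m-k$ coordinates are summed with exponent $q$ via cotype/Kahane); one checks $\frac{k}{s_k}+\frac{m-k}{q}=\frac1r+\frac{m-1}{q}$, so $\alpha_k$ is indeed a vector-valued Bohnenblust--Hille exponent. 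Your $v_k$, with $s_k$ in $m-k+1$ coordinates and $q$ in $k-1$ coordinates, satisfies $\sum_i 1/(v_k)_i=\frac{m-k+1}{kr}+\frac{(k-1)(m+1)}{kq}$, which equals $\frac1r+\frac{m-1}{q}$ only when $k=\frac{m+1}{2}$. So in general your vertices are not admissible exponents, the block argument does not give the stated bound for them, and the coordinatewise identity $\frac1{q_i}=\sum_k\theta_k/(v_k)_i$ fails with the $\theta_k$ of the statement.

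You explicitly deferred the verification that the $\theta_k$ are nonnegative, sum to $1$, and realize $(q_1,\dots,q_m)$ as a genuine coordinatewise convex combination; that is precisely where the mistake would have surfaced, so the proof as written does not close. With the corrected vertices the bookkeeping does work: after reducing to $q_1\leq\dots\leq q_m$ (as you and the paper both do), one has $\frac1{s_k}-\frac1q=\frac1k\left(\frac1r-\frac1q\right)$, hence for each coordinate $i$ the sum $\sum_{k\geq i}\theta_k\left(\frac1{s_k}-\frac1q\right)$ telescopes to $\frac1{q_i}-\frac1q$, while $\sum_k\theta_k=1$ follows from Abel summation together with $\sum_k\frac1{q_k}=\frac1r+\frac{m-1}{q}$; nonnegativity of the $\theta_k$ uses the ordering and $q_m\leq q$. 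Everything else in your outline (the reduction to increasing exponents, the interpolation device, the final product bound $\prod_k C_{Y,\alpha_k}^{\theta_k}$) matches the paper's proof.
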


\begin{proof}
It suffices to consider $q_{i}\leq q_{j}$ whenever $i<j$. For each
$k=1,\dots,m$, define
\[
s_{k}=\frac{kqr}{q+(k-1)r}.
\]
From the proof of Theorem \ref{62} we have (\ref{eq}) for each exponent
$\left(  s_{k},\overset{k\text{ times}}{\dots},s_{k},q\dots,q\right)  .$ More
precisely, from (\ref{estrela}) we have
\[
\left(  \sum_{i_{1},\dots,i_{k}}\left(  \sum_{i_{k+1},\dots,i_{m}}\Vert
vTe_{\mathbf{i}}\Vert_{Y}^{q}\right)  ^{\frac{s_{k}}{q}}\right)  ^{\frac
{1}{s_{k}}}\leq\left(  C_{q}(Y)K_{s_{k},2}\right)  ^{m-k}C_{Y,k}\pi
_{r,1}(v)\Vert T\Vert.
\]
Consequently, for each $k=1,\dots,m$ we have
\[
C_{Y,s_{k},\overset{k\text{ times}}{\dots},s_{k},q\dots,q}\leq\left(
C_{q}(Y)K_{s_{k},2}\right)  ^{m-k}C_{Y,k}.
\]
Since every vector-valued Bohnenblust--Hille exponent $\left(  q_{1}%
,\ldots,q_{m}\right)  $ with $q_{1}\leq\cdots\leq q_{m}$ is obtained by
interpolation of $\alpha_{1},...,\alpha_{m}$ with $\alpha_{k}=\left(
s_{k},\overset{k\text{ times}}{\dots},s_{k},q\dots,q\right)  $, and
$\theta_{1},...,\theta_{m}$ as in (\ref{222}) and (\ref{333}), we conclude
that
\[
C_{Y,q_{1},\dots,q_{m}}\leq\prod_{k=1}^{m}\left(  C_{Y,s_{k},\overset{k\text{
times}}{\dots},s_{k},q,\dots,q}\right)  ^{\theta_{k}}\leq\prod_{k=1}%
^{m}\left(  \left(  C_{q}(Y)K_{s_{k},2}\right)  ^{m-k}C_{Y,k}\right)
^{\theta_{k}}%
\]

\end{proof}

A particular case of Kahane's inequality is Khintchine's inequality: if
$(\varepsilon_{i})$ is a sequence of independent Rademacher variables, then,
for any $p\in\lbrack1,2]$, there exists a constant $\mathrm{A}_{\mathbb{R},p}$
such that, for any $n\geq1$ and any $a_{1}\dots,a_{n}\in\mathbb{R}$,
\[
\left(  \sum_{i=1}^{n}|a_{i}|^{2}\right)  ^{\frac{1}{2}}\leq\mathrm{A}%
_{\mathbb{R},p}\left(  \int_{\Omega}\left\vert \sum_{i=1}^{n}a_{i}%
\varepsilon(\omega)\right\vert ^{p}d\omega\right)  ^{\frac{1}{p}}.
\]
It has a complex counterpart: for any $p\in\lbrack1,2]$, there exists a
constant $\mathrm{A}_{\mathbb{C},p}$ such that, for any $n\geq1$ and any
$a_{1}\dots,a_{n}\in\mathbb{C}$,
\[
\left(  \sum_{i=1}^{n}|a_{i}|^{2}\right)  ^{\frac{1}{2}}\leq\mathrm{A}%
_{\mathbb{C},p}\left(  \int_{\mathbb{T}^{n}}\left\vert \sum_{i=1}^{n}%
a_{i}z_{i}\right\vert ^{p}dz\right)  ^{\frac{1}{p}}.
\]
The best constants $\mathrm{A}_{\mathbb{R},p}$ and $\mathrm{A}_{\mathbb{C},p}$
are known (see \cite{haagerup} and \cite{kk}):

\begin{itemize}
\item $\mathrm{A}_{\mathbb{R},p} =
\begin{cases}
2^{\frac{1}{p}-\frac{1}{2}}, \text{ if } 0<p \leq p_{0} \approx1.847\\
\frac{1}{\sqrt{2}} \left(  \frac{\Gamma\left(  \frac{1+p}{2}\right)  }%
{\sqrt{\pi}} \right)  ^{-\frac{1}{p}}, \text{ if } p > p_{0} ;
\end{cases}
$

\item $\mathrm{A}_{\mathbb{C},p} = \Gamma\left(  \frac{1+p}{2}\right)
^{-\frac{1}{p}}$, if $1 <p \leq2$.
\end{itemize}

\bigskip

Taking $X=Y=\mathbb{K}$ and $r=1$ we obtain estimates for the constants of the
scalar-valued Bohnenblust--Hille inequality with multiple exponents:

\begin{corollary}
If $\left(  q_{1},\ldots,q_{m}\right)  \in\lbrack1,2]^{m}$ are so that%
\[
\frac{1}{q_{1}}+\cdots+\frac{1}{q_{m}}=\frac{m+1}{2},
\]
then%
\[
\left(  \sum_{i_{1}=1}^{+\infty}\dots\left(  \sum_{i_{m}=1}^{+\infty}\left\vert
T\left(  e_{i_{1}},...,e_{i_{m}}\right)  \right\vert ^{q_{m}}\right)
^{\frac{q_{m-1}}{q_{m}}}\dots\right)  ^{\frac{1}{q_{1}}}\leq C_{\mathbb{K}%
,m}^{2m\left(  \frac{1}{q_{m}}-\frac{1}{2}\right)  }\left(  \prod_{k=1}%
^{m-1}\left(  \mathrm{A}_{\mathbb{K},\frac{2k}{k+1}}^{m-k}C_{\mathbb{K}%
,k}\right)  ^{2k\left(  \frac{1}{q_{k}}-\frac{1}{q_{k+1}}\right)  }\right)
\Vert T\Vert
\]
for all $m$-linear operators $T:c_{0}\times\cdots\times c_{0}\rightarrow
\mathbb{K}.$ In particular, for complex scalars, the left hand side of the
above inequality can be replaced by
\[
\left(
{\displaystyle\prod\limits_{j=1}^{m}}
\Gamma\left(  2-\frac{1}{j}\right)  ^{\frac{j}{2-2j}}\right)  ^{2m\left(
\frac{1}{q_{m}}-\frac{1}{2}\right)  }\left(  \prod_{k=1}^{m-1}\left(
\Gamma\left(  \frac{3k+1}{2k+2}\right)  ^{\left(  \frac{-k-1}{2k}\right)
\left(  m-k\right)  }%
{\displaystyle\prod\limits_{j=1}^{k}}
\Gamma\left(  2-\frac{1}{j}\right)  ^{\frac{j}{2-2j}}\right)  ^{2k\left(
\frac{1}{q_{k}}-\frac{1}{q_{k+1}}\right)  }\right)  \Vert T\Vert.
\]

\end{corollary}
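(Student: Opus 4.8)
The plan is to obtain this corollary by specializing the preceding theorem --- the refined estimate for the constants $C_{Y,q_1,\ldots,q_m}$ with the interpolation weights $\theta_k$ --- to the scalar situation $X=Y=\mathbb K$, $v=\mathrm{id}_{\mathbb K}$, $r=1$ and $q=2$. The first step is pure bookkeeping. As a real space $\mathbb K$ is Euclidean, hence has cotype $2$ with $C_{2}(\mathbb K)=1$, so every cotype constant in that theorem is trivial, and $\pi_{1,1}(\mathrm{id}_{\mathbb K})=1$. With $q=2$ and $r=1$ the constraint $\frac1{q_1}+\dots+\frac1{q_m}=\frac1r+\frac{m-1}q$ becomes $\frac1{q_1}+\dots+\frac1{q_m}=\frac{m+1}2$, i.e. $(q_1,\dots,q_m)$ is precisely a vector-valued Bohnenblust--Hille exponent; since $\big(\frac1r-\frac1q\big)^{-1}=\big(1-\tfrac12\big)^{-1}=2$, the interpolation weights collapse to $\theta_m=2m\big(\frac1{q_m}-\frac12\big)$ and $\theta_k=2k\big(\frac1{q_k}-\frac1{q_{k+1}}\big)$ for $k<m$; and the auxiliary exponents become $s_k=\frac{kqr}{q+(k-1)r}=\frac{2k}{k+1}$.

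The second step identifies the abstract constants with sharp classical ones. Over the scalar field Kahane's inequality degenerates to Khintchine's inequality --- with Steinhaus variables in the complex case, Rademacher variables in the real case --- so the constant $K_{s_k,2}$ that the proof of the preceding theorem produces is the optimal Khintchine constant $\mathrm A_{\mathbb K,s_k}=\mathrm A_{\mathbb K,\frac{2k}{k+1}}$. Substituting $C_{q}(Y)=1$ and $K_{s_k,2}=\mathrm A_{\mathbb K,\frac{2k}{k+1}}$ into the bound $C_{Y,q_1,\dots,q_m}\le\prod_{k=1}^{m}\big((C_{q}(Y)K_{s_k,2})^{m-k}C_{Y,k}\big)^{\theta_k}$ and isolating the $k=m$ term (where $m-k=0$) yields the first displayed inequality verbatim.

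For the ``in particular'' (complex) statement I would then insert the known explicit values of the complex Khintchine constants $\mathrm A_{\mathbb C,p}$ in two places: directly into each factor $\mathrm A_{\mathbb C,\frac{2k}{k+1}}$, and into the iterated estimate $C_{\mathbb C,k}\le\prod_{l=1}^{k-1}\mathrm A_{\mathbb C,\frac{2l}{l+1}}$ obtained by applying the corollary that follows \thmref{62} repeatedly. A short computation then rewrites the iterated estimate as $C_{\mathbb C,k}\le\prod_{j=1}^{k}\Gamma\big(2-\tfrac1j\big)^{j/(2-2j)}$ (reindexing $l=j-1$, the $j=1$ factor being $\Gamma(1)=1$), and collecting all the factors with their exponents gives the stated closed form.

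The whole argument is specialization plus substitution, so no new idea is needed. The one point that genuinely requires care is verifying that the constant coming out of the cotype/Kahane step in the proof of the preceding theorem is the \emph{optimal} scalar Khintchine constant; this dictates the right choice of random variables (Steinhaus over $\mathbb C$, Rademacher over $\mathbb R$) so that the exact constants of Haagerup and of K\"onig/Kwapie\'n apply with no loss. Once that is settled, what remains is the routine algebra of simplifying the products of $\Gamma$-values into the compact expression displayed.
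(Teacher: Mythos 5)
Your proposal is correct and is exactly the paper's (implicit) argument: the corollary is stated there as the immediate specialization $X=Y=\mathbb K$, $v=\mathrm{id}$, $r=1$, $q=2$ of the preceding theorem, with $s_k=\frac{2k}{k+1}$, the weights $\theta_k$ as you compute them, and $C_{\mathbb K,k}$ bounded by iterating the corollary to Theorem~\ref{ttrr}. Your caution about which Khintchine constant appears is well placed: the closed form $\Gamma\left(2-\frac1j\right)^{\frac{j}{2-2j}}$ only comes out if one uses the Steinhaus/Gaussian value $\mathrm A_{\mathbb C,p}=\Gamma\left(\frac p2+1\right)^{-1/p}$, whereas the formula $\Gamma\left(\frac{1+p}{2}\right)^{-1/p}$ displayed in the paper (apparently a typo) would instead produce $\Gamma\left(\frac32-\frac1j\right)$.
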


\bigskip

Acknowledgement. The authors thank the referee for important comments and suggestions.

\end{document}